\documentclass[11pt,reqno]{amsart}

\usepackage{amssymb, amsmath, amsthm}
\usepackage{hyperref}
\usepackage[alphabetic,lite]{amsrefs}
\usepackage{verbatim}
\usepackage{amscd}   
\usepackage[all]{xy} 
\usepackage{youngtab} 
\usepackage{young} 
\usepackage{ytableau}
\usepackage{tikz}
\usepackage{ mathrsfs }
\usepackage{cases}
\usepackage{array}
\usepackage{cellspace}
\usepackage{tabu}
\usepackage{calligra,mathrsfs}
\usepackage{bm}

\textheight8.3in
\setlength{\textwidth}{7.2in}
\setlength{\oddsidemargin}{-0.25in}
\setlength{\evensidemargin}{-0.25in}

\newcommand{\defi}[1]{{\upshape\sffamily #1}}
\DeclareMathOperator{\ShHom}{\mathscr{H}\text{\kern -3pt {\calligra\large om}}\,}

\renewcommand{\a}{\alpha}
\renewcommand{\b}{\beta}
\newcommand{\bw}{\bigwedge}

\newcommand{\D}{\mathcal{D}}

\renewcommand{\ll}{\lambda}

\newcommand{\onto}{\twoheadrightarrow}
\newcommand{\oo}{\otimes}

\renewcommand{\P}{\mathcal{P}}

\newcommand{\XA}{\mathscr{X}}

\newcommand{\ZA}{\mathscr{Z}}

\newcommand{\GL}{\operatorname{GL}}

\newcommand{\rk}{\operatorname{rank}}

\newcommand{\Spec}{\operatorname{Spec}}
\newcommand{\Sym}{\operatorname{Sym}}

\renewcommand{\det}{\operatorname{det}}
\newcommand{\dom}{\operatorname{dom}}
\newcommand{\gr}{\operatorname{gr}}

\newcommand{\bb}[1]{\mathbb{#1}}

\newcommand{\mc}[1]{\mathcal{#1}}
\newcommand{\mf}[1]{\mathfrak{#1}}
\newcommand{\ol}[1]{\overline{#1}}
\newcommand{\op}[1]{\operatorname{#1}}

\newcommand{\ul}[1]{\underline{#1}}

\def\PP{{\textbf P}}
\def\lra{\longrightarrow}

\def\hra{\hookrightarrow}

\newtheorem{theorem}{Theorem}[section]
\newtheorem*{theorem*}{Theorem}
\newtheorem*{problem*}{Problem}
\newtheorem{lemma}[theorem]{Lemma}

\newtheorem*{corollary*}{Corollary}

\newtheorem*{main-thm*}{Main Theorem}
\newtheorem*{linear-resolutions*}{Theorem on Linear Resolutions}
\newtheorem*{regularity-powers*}{Theorem on Regularity}
\newtheorem*{injectivity-Ext*}{Theorem on Injectivity of Maps of Ext Modules}
\newtheorem*{Kodaira*}{Kodaira Vanishing for Determinantal Thickenings}

\theoremstyle{definition}

\newtheorem*{definition*}{Definition}

\theoremstyle{remark}
\newtheorem{remark}[theorem]{Remark}
\newtheorem*{remark*}{Remark}

\numberwithin{equation}{section}



\begin{document}

\title{Hodge ideals for the determinant hypersurface}


\author{Michael Perlman}
\address{Department of Mathematics, University of Notre Dame, 255 Hurley, Notre Dame, IN 46556}
\email{mperlman@nd.edu}

\author{Claudiu Raicu}
\address{Department of Mathematics, University of Notre Dame, 255 Hurley, Notre Dame, IN 46556\newline
\indent Institute of Mathematics ``Simion Stoilow'' of the Romanian Academy}
\email{craicu@nd.edu}

\subjclass[2010]{Primary 14M12, 14J17, 14E15, 13D45}

\date{\today}

\keywords{Hodge ideals, determinantal ideals, Hodge filtration, weight filtration, local cohomology}

\begin{abstract} 
 We determine explicitly the Hodge ideals for the determinant hypersurface as an intersection of symbolic powers of determinantal ideals. We prove our results by studying the Hodge and weight filtrations on the mixed Hodge module $\mc{O}_{\XA}(*\ZA)$ of regular functions on the space $\XA$ of $n\times n$ matrices, with poles along the divisor $\ZA$ of singular matrices. The composition factors for the weight filtration on $\mc{O}_{\XA}(*\ZA)$ are pure Hodge modules with underlying $\D$-modules given by the simple $\GL$-equivariant $\D$-modules on $\XA$, where $\GL$ is the natural group of symmetries, acting by row and column operations on the matrix entries. By taking advantage of the $\GL$-equivariance and the Cohen--Macaulay property of their associated graded, we describe explicitly the possible Hodge filtrations on a simple $\GL$-equivariant $\D$-module, which are unique up to a shift determined by the corresponding weights. For non-square matrices, $\mc{O}_{\XA}(*\ZA)$ is replaced by the local cohomology modules $H^{\bullet}_{\ZA}(\XA,\mc{O}_{\XA})$, which turn out to be pure Hodge modules. By working out explicitly the Decomposition Theorem for some natural resolutions of singularities of determinantal varieties, and using the results on square matrices, we determine the weights and the Hodge filtration for these local cohomology modules.

\end{abstract}

\maketitle

\section{Introduction}\label{sec:intro}

To any smooth complex variety $X$ and reduced divisor $Z\subset X$ one can associate the $\D$-module
\begin{equation}\label{eq:OX*Z}
 \mc{O}_X(*Z) = \bigcup_{k\geq 0}\mc{O}_X(kZ),
\end{equation}
consisting of regular functions on $X$ with poles along $Z$. When $X$ is affine with coordinate ring $S$, and $Z$ is defined by the equation $f=0$, the module $\mc{O}_X(*Z)$ is the localization $S_f$. The module $\mc{O}_X(*Z)$ is equipped with the \defi{Hodge filtration} $F_{\bullet}(\mc{O}_X(*Z))$ \cites{saito-MHM,MP-hodge-ideals}, satisfying
\begin{equation}\label{eq:Hodge-in-pole}
 F_{k}(\mc{O}_X(*Z)) \subseteq \mc{O}_X((k+1)Z)\mbox{ for }k\geq 0,
\end{equation}
with equality when $Z$ itself is smooth. In general however, the Hodge filtration is a subtle invariant measuring the singularities of $Z$. Following \cite{MP-hodge-ideals}, we note that the data of the Hodge filtration is equivalent to the sequence of \defi{Hodge ideals} of $Z$, determined by the equality
\begin{equation}\label{eq:Fk-to-Ik}
 F_k(\mc{O}_X(*Z)) = I_k(Z) \oo \mc{O}_X((k+1)Z)\mbox{ for }k\geq 0.
\end{equation}

The goal of this paper is to describe explicitly the Hodge ideals of the determinant hypersurface. We let $\XA=\bb{C}^{n\times n}$, let $S = \bb{C}[x_{i,j}]$ denote the coordinate ring of $\XA$, let $\det=\det(x_{i,j})$ denote the determinant of the generic $n\times n$ matrix, and let $\ZA$ denote the determinant hypersurface consisting of matrices with vanishing determinant. For $1\leq p\leq n$ we let $J_p$ denote the ideal generated by the $p\times p$ minors of $(x_{i,j})$, corresponding to the variety $\ZA_{p-1}\subset \XA$ of matrices of rank $<p$. We write $J_p^{(d)}$ for the \defi{$d$-th symbolic power} of $J_p$, consisting of regular functions that vanish to order $d$ along $\ZA_{p-1}$, with the convention that $J_p^{(d)} = S$ when $d\leq 0$.

\begin{theorem}\label{thm:hodge-ideals-detl}
The Hodge ideals of $\ZA$ are given by
 \begin{equation}\label{eq:Ik=intersec-symbolic}
  I_k(\ZA) = \bigcap_{p=1}^{n-1} J_p^{\left((n-p)\cdot(k-1) - {n-p\choose 2}\right)}\mbox{ for }k\geq 0.
 \end{equation}
\end{theorem}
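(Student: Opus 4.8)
The proof proceeds in four main stages, following the strategy outlined in the abstract. First I would reduce the computation of the Hodge filtration on $\mc{O}_{\XA}(*\ZA) = S_{\det}$ to understanding the weight filtration, using Saito's theory of mixed Hodge modules. The composition factors of the weight filtration on $\mc{O}_{\XA}(*\ZA)$ are pure Hodge modules whose underlying $\D$-modules are the simple $\GL$-equivariant $\D$-modules $D_p$ on $\XA$ (indexed by $p = 0, 1, \dots, n-1$, with $D_n = S$ itself sitting at the top), where $\GL = \GL_n(\bb{C}) \times \GL_n(\bb{C})$ acts by row and column operations. I would first pin down these composition factors and the weights with which they appear — this is essentially known from the theory of $\GL$-equivariant $\D$-modules on matrix spaces and the explicit combinatorics of the category.

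**Stage two: Hodge filtration on a simple equivariant $\D$-module.** The key technical input, which the abstract flags as the heart of the matter, is to determine the Hodge filtration $F_\bullet D_p$ on each simple $\GL$-equivariant $\D$-module. The plan is to exploit two features: (i) $\GL$-equivariance forces $F_\bullet D_p$ to be a filtration by $\GL$-submodules of $D_p$, and the $\GL$-isotypic/lattice structure of $D_p$ is completely explicit; (ii) the associated graded $\gr^F D_p$ is a Cohen--Macaulay module over $\Sym(\mc{T}_{\XA})$ supported on the conormal variety of $\ZA_{p-1}$, so its dimension and Cohen--Macaulayness rigidly constrain where the filtration can jump. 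Combining these, one shows $F_\bullet D_p$ is \emph{unique} up to an overall shift, and that shift is determined by the weight at which $D_p$ appears as a composition factor. Normalizing via the known case $p = n$ (where $D_n = S$ and the Hodge filtration is trivial) and via functoriality under the localization map, one gets an explicit closed formula for each $F_k D_p$ as a $\GL$-submodule of $D_p$, hence as a symbolic power (or sum of symbolic powers) of a determinantal ideal tensored into an appropriate $\mc{O}_{\XA}(m\ZA)$.

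**Stage three: assembling the filtration on $S_{\det}$.** With the Hodge filtration known on each composition factor, and the weight filtration on $S_{\det}$ understood, I would use strictness of the Hodge filtration with respect to the weight filtration (a basic property of MHM) to compute $F_k(\mc{O}_{\XA}(*\ZA))$ as an extension: concretely, $F_k(S_{\det})$ is built from the pieces $F_{k - a_p}(D_p)$ where $a_p$ is the relevant Hodge/weight shift for $D_p$. Tracking these shifts carefully — this is where the arithmetic $(n-p)(k-1) - \binom{n-p}{2}$ should emerge, as $\binom{n-p}{2}$ is the codimension-type correction coming from the iterated structure of the determinantal stratification and $n-p$ is the ``speed'' at which vanishing order along $\ZA_{p-1}$ grows per unit increase in $k$. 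Then, reading off $F_k(S_{\det}) = I_k(\ZA) \oo \mc{O}_{\XA}((k+1)\ZA)$ and comparing with the explicit submodule description gives $I_k(\ZA)$ as the stated intersection of symbolic powers $\bigcap_{p=1}^{n-1} J_p^{((n-p)(k-1) - \binom{n-p}{2})}$; the intersection (rather than a single symbolic power) appears precisely because $S_{\det}$ has all the $D_p$ as composition factors, and the Hodge ideal must lie in the Hodge piece of each.

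**Main obstacle.** I expect the crux to be Stage two — proving that the Hodge filtration on a simple $\GL$-equivariant $\D$-module is forced, up to shift, purely by equivariance plus Cohen--Macaulayness of $\gr^F$. One must rule out ``spurious'' $\GL$-stable filtrations that have the right support and a Cohen--Macaulay associated graded but are not the Hodge filtration; this likely requires a careful analysis of the $\Sym(\mc{T}_{\XA})$-module structure on $\gr^F D_p$ (its module of generators, its Hilbert series, perhaps a comparison with the characteristic cycle computed independently), together with Saito's characterization of the Hodge filtration as the unique one making the associated graded a Cohen--Macaulay module of the correct dimension with the correct generation degree. A secondary difficulty is purely bookkeeping: getting all the weight shifts and Tate twists exactly right so that the binomial correction term lands correctly, and verifying the boundary/degenerate cases (small $k$, where some symbolic-power exponents become nonpositive and the convention $J_p^{(d)} = S$ for $d \leq 0$ must be invoked) are consistent with the general formula.
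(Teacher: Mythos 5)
Your plan follows the paper's architecture almost exactly: rigidity of the Hodge filtration on each simple equivariant $\D$-module $D_p$ via $\GL$-equivariance plus Cohen--Macaulayness of $\gr^F$ (the paper's Theorem~\ref{thm:Hodge-Dp}, proved exactly as you suggest, using that $\gr^F(D_p)$ is Cohen--Macaulay with irreducible support the conormal variety $C_p$, so that no nonzero element is killed by anything outside $J_{p+1}$ or $J'_{n-p+1}$), then strictness to split $F_r(S_{\det})$ into the pieces $F_r(D_p)$, and finally the de Concini--Eisenbud--Procesi description of $\mf{W}(J_p^{(d)})$ to convert the weight-combinatorial answer into the intersection of symbolic powers. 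You also correctly identify Stage two as the crux and the mechanism (support plus Cohen--Macaulayness) by which spurious filtrations are ruled out.

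The one point where your plan has a real gap is Stage one: you assert that the weights $w_p$ with which the $D_p$ occur in $\gr^W\mc{O}_{\XA}(*\ZA)$ are ``essentially known from the theory of $\GL$-equivariant $\D$-modules.'' They are not. Equivariant $\D$-module theory only tells you that the composition factors of the weight filtration are $D_0,\ldots,D_n$, each once, in decreasing weight order with $w_n=n^2$; the actual values $w_p=n^2+n-p$ (equivalently the Tate twists $-\binom{n-p+1}{2}$, which are precisely where the correction term $\binom{n-p}{2}$ in the exponent comes from) constitute Theorem~\ref{thm:weight-filtration} and require a separate argument. The paper obtains them by a bootstrap that reverses the logical order you propose: it first proves the rigidity statement of Stage two, and then uses the fact that each $F_r(S_{\det})$ is an $S$-submodule of $S_{\det}$ --- so closed under multiplication by minors, which moves the extremal weight $\delta^p$ of $D_p$ into the stratum $W^{p+1}$ --- to force $w_p\le w_{p+1}+1$, hence $w_p=w_{p+1}+1$. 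Without some such argument pinning down the Tate twists, your Stage three cannot produce the exponent $(n-p)(k-1)-\binom{n-p}{2}$; so you should either supply this step or locate an independent computation of the weights.
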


It follows from (\ref{eq:Ik=intersec-symbolic}) that $I_0(\ZA)=I_1(\ZA)=S$ and $I_2(\ZA) = J_{n-1}$, which was established at the set-theoretic level in \cite[Example~20.14]{MP-hodge-ideals}. Since $\ZA$ has multiplicity $m=n-p+1$ along $\ZA_{p-1}=V(J_p)$, and $\ZA_{p-1}$ has codimension $r=(n-p+1)^2$ in $\XA$, it follows from \cite[Theorem~E]{MP-hodge-ideals} that $I_k(\ZA)\subseteq J_p^{(q)}$ for $q=\min\{n-p,(n-p+1)\cdot(k-n+p)\}$, whereas (\ref{eq:Ik=intersec-symbolic}) implies that when $k$ is large, the optimal value of $q$ is given by $(n-p)\cdot(k-1) - {n-p\choose 2}$. 

We prove our results by taking advantage of the rich symmetry coming from the action of the group $\GL=\GL_n(\bb{C})\times\GL_n(\bb{C})$ on $\XA$ (via row and column operations), which preserves $\ZA$, along with all the determinantal varieties $\ZA_{p-1}$, $p\leq n$. It follows that the Hodge ideals $I_k(\ZA)$ and the filtered pieces $F_k(\mc{O}_{\XA}(*\ZA))$ are $\GL$-subrepresentations of $\mc{O}_{\XA}(*\ZA)$. Every such subrepresentation $M$ can be described in terms of its irreducible decomposition, which in turn is completely determined by a subset $\mf{W}(M)$ of the set of \defi{dominant weights} $\bb{Z}^n_{\dom}$ (see Sections~\ref{subsec:GL-reps} and~\ref{subsec:equiv-Dmods} for more details). We prove the following.

\begin{theorem}\label{thm:hodge-filtration-detl}
For $k\in\bb{Z}$ we let
\[ U_k^p = \left\{ \ll\in\bb{Z}^n_{\dom} : \ll_p\geq p-n\geq \ll_{p+1},\ \ll_{p+1}+\cdots+\ll_n \geq -{n-p+1\choose 2} - k \right\},\mbox{ for }0\leq p\leq n.\]
The generation level for the Hodge filtration on $\mc{O}_{\XA}(*\ZA)$ is ${n\choose 2}$. Moreover, we have
\begin{equation}\label{eq:hodge-filtration-detl}
  \mf{W}(F_k(\mc{O}_{\XA}(*\ZA))) = \bigsqcup_{p=0}^n U_k^p.
\end{equation}
\end{theorem}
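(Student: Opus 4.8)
The plan is to reduce the computation of the Hodge filtration on $\mc{O}_{\XA}(*\ZA)$ to two inputs: (i) the weight filtration on $\mc{O}_{\XA}(*\ZA)$ as a mixed Hodge module, whose composition factors are the simple $\GL$-equivariant pure Hodge modules supported on the determinantal strata $\ZA_p$; and (ii) an explicit description of the Hodge filtration on each such simple $\GL$-equivariant $\D$-module, which by the general theory is unique up to an overall shift pinned down by the weight. The abstract promises exactly these two ingredients, so I would structure the proof as: first recall the classification of simple $\GL$-equivariant $\D$-modules $D_p$ on $\XA$ (indexed by $0\le p\le n$, with $D_p$ having support $\ZA_p$ and $D_n=\mc{O}_{\XA}$), and record the set $\mf{W}$ of dominant weights appearing in each $D_p$ together with the known Cohen--Macaulayness of the associated graded of a good filtration. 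Then I would identify the correct Hodge filtration shift on each $D_p$: using $\GL$-equivariance the associated graded module is a finitely generated $\GL$-equivariant module over $\gr F_\bullet \D_{\XA}$, its generators lie in a single $\GL$-isotypic degree range, and the Cohen--Macaulay property forces the filtration to be the ``lowest possible'' one compatible with the $F$-module structure; the shift is then read off from the weight of the corresponding pure Hodge module in the composition series of $\mc{O}_{\XA}(*\ZA)$.

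Next I would assemble these pieces. The weight filtration on $\mc{O}_{\XA}(*\ZA)$ (equivalently on the localization $S_{\det}$) has a known, clean structure: its graded pieces are (up to Tate twist) the intersection cohomology Hodge modules $IC_{\ZA_p}$, and the multiplicities and weights can be extracted from the $\GL$-equivariant structure of local cohomology $H^{\bullet}_{\ZA_p}(\XA,\mc{O}_{\XA})$ — this is precisely the kind of computation the paper sets up via the Decomposition Theorem for resolutions of determinantal varieties. Granting this, the Hodge filtration $F_k(\mc{O}_{\XA}(*\ZA))$ is built from the Hodge filtrations on the graded pieces by the standard ``filtration is strict'' mechanism for mixed Hodge modules: $\mf{W}(F_k(\mc{O}_{\XA}(*\ZA)))$ is the disjoint union over $p$ of $\mf{W}(F_{k-c_p}(D_p))$, where $c_p$ is the shift determined by the weight of the $p$-th composition factor. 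Comparing with the claimed formula, the set $U_k^p$ should be exactly $\mf{W}(F_{k-c_p}(D_p))$: the conditions $\ll_p\ge p-n\ge \ll_{p+1}$ cut out the weights supported on the stratum $\ZA_p$ (this is the known $\mf{W}(D_p)$), and the single inequality $\ll_{p+1}+\cdots+\ll_n\ge -\binom{n-p+1}{2}-k$ is the filtration-level condition, with the constant $\binom{n-p+1}{2}$ encoding both the intrinsic generation level of $F_\bullet D_p$ and the weight shift $c_p$. I would verify the constant by checking the two endpoints: $p=n$ gives $\mf{W}(F_k(\mc{O}_{\XA}))$, which must be the nonnegative-level-$\le k$ condition (forcing the $\binom{1}{2}=0$ normalization), and $p=0$ recovers the full multiplicity-one behavior; the ``generated in level $\binom n2$'' statement then follows because the last stratum to ``turn on'' as $k$ grows is $\ZA_1$, contributing the bound $\binom n2$.

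The main obstacle, I expect, is pinning down the Hodge filtration shift $c_p$ on each simple $\GL$-equivariant $\D$-module precisely — i.e., proving that the unique-up-to-shift Hodge filtration on $D_p$ is the one given by the Cohen--Macaulay associated graded with generators in the expected degree, \emph{and} computing the normalizing constant so that it glues correctly inside $\mc{O}_{\XA}(*\ZA)$. This requires: (a) an independent handle on $F_\bullet D_p$, presumably by realizing $D_p$ (or a Tate twist) as a summand of $H^\bullet_{\ZA_{p-1}}(\XA,\mc{O}_{\XA})$ or as $j_{!*}$ of a local system on the smooth locus and computing its Hodge filtration via a resolution, and (b) matching weights in the weight filtration on $\mc{O}_{\XA}(*\ZA)$, which is where the Decomposition Theorem bookkeeping (Euler characteristics, shifts by relative dimension of the resolution) enters and is error-prone. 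Once the shifts $c_p$ are correct, the rest is a formal comparison of weight sets. Finally, deducing Theorem~\ref{thm:hodge-ideals-detl} from Theorem~\ref{thm:hodge-filtration-detl} is a separate, purely combinatorial step: one translates the weight-set description of $F_k(\mc{O}_{\XA}(*\ZA))$ back through $F_k = I_k(\ZA)\oo\mc{O}_{\XA}((k+1)\ZA)$ and recognizes the intersection of symbolic powers $\bigcap_p J_p^{((n-p)(k-1)-\binom{n-p}{2})}$ by comparing highest weights, using that $\mf{W}(J_p^{(d)})$ is itself a shifted version of the $U^p$ strata.
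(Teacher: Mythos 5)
Your overall architecture matches the paper's: (i) identify the composition factors of the weight filtration on $\mc{O}_{\XA}(*\ZA)\simeq S_{\det}$ as the simple equivariant modules $D_p$ with appropriate Tate twists, (ii) show that the Hodge filtration on each $D_p$ is determined up to a single shift $l_p$ by $\GL$-equivariance plus the Cohen--Macaulayness of the associated graded (this is the paper's Theorem~\ref{thm:Hodge-Dp}, proved essentially as you describe, with the CM property guaranteeing that multiplication by suitable elements of $S$ and $S'$ is injective on $\gr^F(D_p)$), and (iii) assemble via strictness, so that $\mf{W}(F_k(S_{\det}))=\bigsqcup_p \mf{W}(F_k(D_p))$. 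Steps (i) and (iii) also go through as you say: the lattice of $\D$-submodules of $S_{\det}$ forces the weight-graded pieces to be exactly the $D_p$ in decreasing order of weight.

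The genuine gap is in how you propose to pin down the shifts, i.e.\ the weights $w_p$ (equivalently the constants $-{n-p+1\choose 2}$, equivalently the starting levels $l_p={n-p\choose 2}$). You defer this to ``the $\GL$-equivariant structure of local cohomology $H^{\bullet}_{\ZA_p}(\XA,\mc{O}_{\XA})$'' and to Decomposition Theorem bookkeeping for resolutions of the $\ZA_p$. That is not carried out, and it is not how the paper handles the square case: the Decomposition Theorem machinery in the paper is used only for rectangular matrices, and extracting the weight filtration of $\mc{O}_{\XA}(*\ZA)$ from a resolution in the square case would be a substantially harder (and different) computation. Your proposed endpoint checks do not close the gap: $p=n$ only normalizes $w_n=n^2$, and the $p=0$ case is not independently known. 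The paper's actual argument is short but essential: since the weights $w_p$ strictly decrease in $p$ and $w_n=n^2$, it suffices to prove $w_p\le w_{p+1}+1$; this follows because $F_{l_p}(S_{\det})$ is an $S$-submodule of $S_{\det}$, hence its weight set is upward closed, so $\delta^p+(1^{p+1})$ lies in $\mf{W}(F_{l_p}(S_{\det}))$, and this weight is identified as the minimal element $\ll^{p+1,(1^{n-p-1})}$ of $W^{p+1}_{n-p-1}$, sitting in level $l_{p+1}+n-p-1$ of the filtration on $D_{p+1}$; the resulting inequality $l_{p+1}+n-p-1\le l_p$ is exactly what produces the triangular numbers ${n-p\choose 2}$. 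Without this (or an equivalent) step your formula for $U_k^p$ is unproved for $0<p<n$. Two smaller points: the generation-level claim requires knowing that $\gr^F(D_p)$ is generated in its starting degree (the paper gets this from the explicit generator $\bb{S}_{\delta^p}V_1\oo\bb{S}_{\delta^p}V_2$), not merely that the starting levels are bounded by ${n\choose 2}$; and the extremal stratum responsible for the bound ${n\choose 2}$ is $\ZA_0$ (the origin), not $\ZA_1$.
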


It is interesting to compare the assertion about the generation level (see Section \ref{subsec:Hodge-detl}) in Theorem~\ref{thm:hodge-filtration-detl} with \cite[Theorem~A]{MP-min-exp}, which for $n\geq 2$ gives the upper bound $\dim(\ZA)-\tilde{\a}_{\ZA}$ for the generation level, where $\tilde{\a}_{\ZA}$ is the minimal exponent of the singular divisor $\ZA$. Since the reduced Bernstein--Sato polynomial of $\ZA$ is $(s+2)\cdots(s+n)$, we have that $\tilde{\a}_{\ZA}=2$ and therefore $\dim(\ZA)-\tilde{\a}_{\ZA}=n^2-3$. For $n=2$ this agrees with the level ${n\choose 2}$ that we determine, which is also a consequence of \cite[Theorem~0.7]{saito-gen-level} since $\ZA$ is a homogeneous isolated singularity (the affine cone over $\PP^1\times\PP^1$). For $n\geq 3$ however, there is a strict inequality $n^2-3>{n\choose 2}$.

The equivalence between (\ref{eq:Ik=intersec-symbolic}) and (\ref{eq:hodge-filtration-detl}) is established in Section~\ref{subsec:equiv-ideals-filtration}. To prove Theorem~\ref{thm:hodge-filtration-detl}, we analyze the structure of $\mc{O}_{\XA}(*\ZA)$ as a mixed Hodge module. For each $p=0,\cdots,n$ we let $D_p = \mc{L}(\ZA_p,\XA)$ denote the intersection homology module associated to $\ZA_p$, and let $IC_{\ZA_p}^H$ denote the Hodge module on $\XA$ corresponding to the trivial variation of Hodge structure on the orbit $O_p$ (see Section~\ref{subsec:Hodge-IC}). Up to a Tate twist, $IC_{\ZA_p}^H$ is the only Hodge module with underlying $\D$-module $D_p$. We write $W_{\bullet}$ for the weight filtration, and $\gr_{\bullet}^W$ for the associated graded with respect to $W_{\bullet}$, and prove the following.

\begin{theorem}\label{thm:weight-filtration}
 We have that $\gr^W_w\mc{O}_{\XA}(*\ZA)=0$ if $w<n^2$ or $w>n^2+n$, and
 \[ \gr^W_{n^2+n-p}\mc{O}_{\XA}(*\ZA) = IC_{\ZA_p}^H\left(-{n-p+1\choose 2}\right)\mbox{ for }p=0,\cdots,n.\]
\end{theorem}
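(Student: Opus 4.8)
The plan is to compute the weight filtration on $\mc{O}_{\XA}(*\ZA)$ by combining the known $\D$-module structure of the localization with the constraints imposed by the theory of mixed Hodge modules. First I would recall that, as a $\GL$-equivariant $\D$-module, $\mc{O}_{\XA}(*\ZA)$ has a composition series whose simple quotients are exactly the modules $D_p = \mc{L}(\ZA_p,\XA)$, each appearing with multiplicity one, for $p=0,\ldots,n$. This is classical (see Raicu--Weyman and related work on equivariant $\D$-modules on matrix spaces): the simple $\GL$-equivariant $\D$-modules supported on $\ol{O_p}$ that appear in $\mc{O}_{\XA}(*\ZA)$ are precisely the $D_p$, and $\mc{O}_{\XA}(*\ZA)/S$ already accounts for $D_0,\ldots,D_{n-1}$ while $D_n = S$ is the sub. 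Since the weight filtration $W_\bullet$ on a mixed Hodge module is, in particular, a filtration by $\D$-submodules with pure (hence semisimple) associated graded, and since each $D_p$ is simple and occurs once, each $\gr^W_w$ must be a direct sum of a subset of the $IC^H_{\ZA_p}$'s (with some Tate twists), and each $IC^H_{\ZA_p}$ occurs in exactly one graded piece. So the entire content of the theorem is: (i) identifying which weight $w$ each $D_p$ sits in, and (ii) pinning down the Tate twist.

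For the weights themselves, I would argue by induction on the poset of orbit closures, peeling off supports from the top. The pure Hodge module $IC^H_{\ZA_n} = \mc{O}_{\XA}$ sits in weight $n^2 = \dim\XA$, which is the $W_{\geq}$-smallest piece (it is the $\D$-submodule $S \subset S_{\det}$). Next, the quotient $\mc{O}_{\XA}(*\ZA)/\mc{O}_{\XA}$ is the local cohomology $H^1_{\ZA}(\XA,\mc{O}_{\XA})$, and one continues: the structure of the weight filtration on $j_*$ of the constant Hodge module on the complement $\XA\setminus\ZA$ is governed by the geometry of the boundary stratification $\ZA = \ol{O_{n-1}} \supsetneq \ol{O_{n-2}} \supsetneq \cdots$. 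The expected behavior, analogous to the normal-crossings case where $\gr^W$ of $j_*\bb{Q}^H$ picks up one weight per codimension of stratum, is that moving from the stratum $O_p$ to $O_{p-1}$ raises the support codimension and correspondingly lowers the weight by the right increment; the orbit $O_p$ has codimension $(n-p)^2$ in $\XA$, and the claim $\gr^W_{n^2+n-p}$ amounts to saying the weight drops by one as $p$ drops by one (so the weights are $n^2, n^2+1, \ldots, n^2+n$, read off the $n+1$ orbits). I would make this precise by using that $\mc{O}_{\XA}(*\ZA)$ is the minimal (or rather, the canonical) extension $j_{!*}$-type object built from the variation on $\XA\setminus\ZA$ via successive applications of $j_*$ along the strata, and tracking weights through the standard distinguished triangles $i_*i^! \to \mathrm{id} \to Rj_*j^*$; alternatively, one can bootstrap from Theorem~\ref{thm:hodge-filtration-detl}'s philosophy in reverse, but cleaner is to use the known computation of $H^\bullet_{\ZA}(\XA,\mc{O}_{\XA})$ and the Gabber purity / weight monodromy formalism for the nearby/vanishing cycles of $\det$.

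The Tate twists are fixed by a normalization computation: one must check that $\gr^W_{n^2+n-p}$ equals $IC^H_{\ZA_p}$ twisted by $-\binom{n-p+1}{2}$ rather than by some other integer. I would pin this down by locating a single explicit weight vector in each composition factor. Concretely, the $\D$-module $D_p$ is generated over $S_{\det}$ (or over $S\langle\partial\rangle$) by an explicit rational function, essentially a power of a lower-order minor times a power of $\det^{-1}$, and one can read off its Hodge-filtration level and hence, via the relation between the Hodge filtration level of a generator and the weight, the correct Tate twist; the shift $\binom{n-p+1}{2}$ is exactly the "defect" coming from the multiplicity $m = n-p+1$ of $\ZA$ along $\ZA_{p-1}$, via a formula like $\binom{m}{2}$. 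Equivalently, one compares against the known case $p = n-1$: there $IC^H_{\ZA_{n-1}}(-1)$ should be $\gr^W_{n^2+1}$, which matches the fact that $I_2(\ZA) = J_{n-1}$ and the first pole step; this anchors the induction. The main obstacle, I expect, is step (i) in full rigor: justifying the exact weight drop "by one per orbit" requires either a careful inductive use of the decomposition/weight spectral sequence for the stratified localization, or invoking purity of $H^\bullet_{\ZA}(\XA,\mc{O}_{\XA})$ (which the introduction flags as true for non-square matrices and which, here, feeds back into the square case). Once the weights are known, semisimplicity of pure Hodge modules plus multiplicity-one of the $D_p$ makes the rest essentially automatic.
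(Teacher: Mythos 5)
Your first paragraph is correct and matches the paper's setup: since the submodule lattice of $S_{\det}$ is the chain (\ref{eq:filtration-Sdet}) with composition factors $D_n=S$ at the bottom and $D_0$ at the top, and since each $\gr^W_w$ is semisimple by strict support decomposition, each $\gr^W_w$ is a single $IC_{\ZA_p}^H(k_p)$ and the weights satisfy $w_0>w_1>\cdots>w_n=n^2$. But the heart of the theorem is precisely the step you flag as "the main obstacle" and do not carry out: proving that the weight drops by exactly one per orbit, i.e.\ $w_p-w_{p+1}\leq 1$. None of the routes you sketch closes this gap. The normal-crossings analogy does not apply: the strata here have codimensions $(n-p)^2$, which jump quadratically, while the weights move by one, so there is no "one weight per codimension" heuristic to invoke. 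The appeal to purity of $H^\bullet_{\ZA}(\XA,\mc{O}_{\XA})$ fails in the square case ($H^1_{\ZA}(\XA,\mc{O}_{\XA})=S_{\det}/S$ has $n$ composition factors in $n$ distinct weights, so it is genuinely mixed); moreover, in the paper the purity statement for $m>n$ is \emph{deduced from} the square case, so using it here would be circular. The nearby-cycles/weight-monodromy route for $\det$ is a substantial independent computation that you do not begin. Finally, the Tate twist is not a separate normalization to pin down: once $w_p$ is known, $k_p$ is forced by $w_p=d_p-2k_p$ (this is exactly (\ref{eq:formula-kp})), and your proposed anchor via $I_2(\ZA)=J_{n-1}$ uses the paper's conclusion as input.

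The paper's actual mechanism, absent from your proposal, is to convert the weight question into a statement about the starting levels $l_p$ of the Hodge filtrations on the $D_p$, using Theorem~\ref{thm:Hodge-Dp}(c) to write $w_p=n^2+(n-p)^2-2l_p$. Strictness of morphisms of mixed Hodge modules gives the $\GL$-equivariant decomposition $F_r(S_{\det})=\bigoplus_p F_r(D_p)$, and $F_{l_p}(S_{\det})$ is an $S$-submodule of $S_{\det}$, so by (\ref{eq:W-module-interval}) one may multiply the lowest weight $\delta^p\in\mf{W}(F_{l_p}(D_p))$ by a $(p+1)\times(p+1)$ minor to conclude $\delta^p+(1^{p+1})\in\mf{W}(F_{l_p}(S_{\det}))$. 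Identifying $\delta^p+(1^{p+1})$ as the minimal element $\ll^{p+1,(1^{n-p-1})}$ of $W^{p+1}_{n-p-1}$, which by Theorem~\ref{thm:Hodge-Dp}(b) lives in $\gr_{l_{p+1}+n-p-1}(D_{p+1})$, yields $l_{p+1}+n-p-1\leq l_p$, which is exactly $w_p\leq w_{p+1}+1$. Without this (or an equally concrete substitute), your argument establishes only the qualitative shape of the weight filtration, not the stated weights.
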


To go from Theorem~\ref{thm:weight-filtration} to (\ref{eq:hodge-filtration-detl}), we need to understand the Hodge filtration on each $IC_{\ZA_p}^H$ (since Tate twists only amount to a shift in $F_{\bullet}$). We do so in Theorem~\ref{thm:Hodge-Dp}, in the more general case when $\XA$ is a space of rectangular (not necessarily square) matrices. The following is a consequence of Theorem~\ref{thm:Hodge-Dp}.

\begin{theorem}\label{thm:Hodge-ICZp}
 If $\XA\simeq\bb{C}^{n\times n}$ and $F_{\bullet}$ is the Hodge filtration on the $\D$-module $D_p$ underlying $IC_{\ZA_p}^H$, then 
 \[\mf{W}(F_k(D_p)) = U_{k-{n-p+1\choose 2}}^p\mbox{ for all }k\in\bb{Z},\]
 and in particular $F_k(D_p)$ is non-zero if and only if $k\geq(n-p)^2$.
\end{theorem}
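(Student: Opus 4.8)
The plan is to deduce this statement as a special case of the more general description of the Hodge filtration on simple $\GL$-equivariant $\D$-modules given in Theorem~\ref{thm:Hodge-Dp} (stated later in the paper), so the work here is essentially bookkeeping: translating the general formula, valid for spaces of rectangular matrices, into the square case $\XA \simeq \bb{C}^{n\times n}$. First I would recall from Section~\ref{subsec:Hodge-IC} the precise normalization of the Hodge module $IC_{\ZA_p}^H$ and of its underlying $\D$-module $D_p = \mc{L}(\ZA_p,\XA)$, in particular the convention fixing which Tate twist of $IC_{\ZA_p}^H$ has $D_p$ as underlying module, so that the indexing of $F_\bullet$ is pinned down without ambiguity. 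Then I would invoke Theorem~\ref{thm:Hodge-Dp} to obtain the set of dominant weights $\mf{W}(F_k(D_p))$ appearing in degree $k$, and simplify the resulting expression in the square case. The upshot should be exactly the identity $\mf{W}(F_k(D_p)) = U^p_{k - \binom{n-p+1}{2}}$, where the shift by $\binom{n-p+1}{2}$ records the difference between the natural grading used in Theorem~\ref{thm:Hodge-Dp} and the one used to define the sets $U^p_k$ in Theorem~\ref{thm:hodge-filtration-detl}.

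The second half of the statement — that $F_k(D_p)\neq 0$ if and only if $k\geq (n-p)^2$ — then follows by a direct inspection of the set $U^p_{k-\binom{n-p+1}{2}}$. Writing $m = n-p$, membership in $U^p_j$ requires a dominant weight $\ll$ with $\ll_p \geq p-n = -m \geq \ll_{p+1}$ and $\ll_{p+1}+\cdots+\ll_n \geq -\binom{m+1}{2} - j$. The "most efficient" weight, i.e. the one whose trailing sum $\ll_{p+1}+\cdots+\ll_n$ is as large (least negative) as possible subject to $\ll_{p+1}\leq -m$ and dominance, is $\ll = (0^{p}, -m, -m, \ldots, -m)$ with $m$ copies of $-m$ in the last slots; this has trailing sum $-m^2$. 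So $U^p_j$ is nonempty precisely when $-m^2 \geq -\binom{m+1}{2} - j$, i.e. $j \geq m^2 - \binom{m+1}{2} = \binom{m}{2}$. Unwinding $j = k - \binom{m+1}{2}$ gives $k \geq \binom{m}{2} + \binom{m+1}{2} = m^2 = (n-p)^2$, as claimed. (One should also double-check the boundary dominance condition $\ll_p \geq -m$ is compatible — taking $\ll_p = 0$ it is — and that no smaller $k$ can work by the extremality just argued.)

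The main obstacle is not conceptual but organizational: making sure the normalization conventions for $IC_{\ZA_p}^H$, for the Tate twist $(-\binom{n-p+1}{2})$ appearing in Theorem~\ref{thm:weight-filtration}, and for the shift implicit in the definition of $U^p_k$ are all mutually consistent, so that the single shift by $\binom{n-p+1}{2}$ is exactly right. A secondary point of care is that Theorem~\ref{thm:Hodge-Dp} is stated for general rectangular matrices; I would need to verify that substituting equal dimensions into its formula genuinely specializes to the sets $U^p_j$ above — in particular that the "rank $< p$ locus" conventions and the ranges of the parameter $p$ match (here $0\leq p\leq n$, with $p=n$ giving $D_n = \mc{O}_\XA$ and $p=0$ recovering the simple module supported on the smallest orbit). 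Once these alignments are checked, both assertions follow immediately, so I would keep this proof short and defer all substantive content to the proof of Theorem~\ref{thm:Hodge-Dp}.
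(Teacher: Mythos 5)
Your proposal is correct and matches the paper's (implicit) argument: the paper presents this theorem as a direct consequence of Theorem~\ref{thm:Hodge-Dp}, with the normalization $l_0=c_p=(n-p)^2$ pinned down by Lemma~\ref{lem:hodgeF-LZX} (equivalently, by Theorem~\ref{thm:Hodge-Dp}(c) together with the fact that the untwisted $IC_{\ZA_p}^H$ has weight $d_p$), after which $\bigsqcup_{d\le k-l_0}W^p_d = U^p_{k-\binom{n-p+1}{2}}$ is exactly the bookkeeping you describe. Your nonemptiness computation for $U^p_j$ via the extremal weight $(0^p,(p-n)^{n-p})$ is also correct and gives the threshold $k\ge (n-p)^2$.
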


We reformulate the last assertion in Theorem~\ref{thm:Hodge-ICZp} by saying that the Hodge filtration for $IC_{\ZA_p}^H$ starts in level $(n-p)^2$ (the codimension of $\ZA_p$ in $\XA$), which is in fact also the generation level. Combining this with Theorem~\ref{thm:weight-filtration}, we get that the Hodge filtration for $\gr^W_{n^2+n-p}\mc{O}_{\XA}(*\ZA)$ starts (and is generated) in level ${n-p\choose 2}$, which is maximized for $p=0$. This explains the assertion about the generation level in Theorem~\ref{thm:hodge-filtration-detl}. The special case $p=n$ in Theorem~\ref{thm:Hodge-ICZp} is easy to understand: we have $D_n = S$, and for $k\geq 0$ we have that $U_k^n = \{\ll\in\bb{Z}^n_{\dom}:\ll_n\geq 0\}$ is the set of all partitions with at most $n$ parts (independently on $k$); this reflects the fact that $S$ has the trivial Hodge filtration $F_k(S)=S$ for all $k\geq 0$, and that $\mf{W}(S)=U_k^n$ is determined by Cauchy's formula (\ref{eq:decomp-S}).

If we consider instead non-square matrices $\XA\simeq\bb{C}^{m\times n}$, $m>n$, then the variety $\ZA$ of singular matrices is no longer a divisor. Nevertheless, the local cohomology groups $H^{\bullet}_{\ZA}(\XA,\mc{O}_{\XA})$ replace $\mc{O}_{\XA}(*\ZA)$ and have a natural structure of (mixed) Hodge modules. We know from \cite[(5.1)]{raicu-dmods} and \cites{raicu-weyman,raicu-weyman-witt} that the only non-zero local cohomology groups are
\begin{equation}\label{eq:Dp-as-loccoh}
 D_p = H^{1+(n-p)\cdot(m-n)}_{\ZA}(\XA,\mc{O}_{\XA})\mbox{ for }p=0,\cdots,n-1,
\end{equation}
where $D_p=\mc{L}(\ZA_p,\XA)$ as before. By Theorem~\ref{thm:Hodge-Dp}, the Hodge filtration is determined by the weights of the corresponding Hodge modules, which are given as follows.

\begin{theorem}\label{thm:hodge-filtration-loccoh}
 For each $p=0,\cdots,n-1$, the local cohomology group $H^{1+(n-p)\cdot(m-n)}_{\ZA}(\XA,\mc{O}_{\XA})$ is a pure Hodge module of weight $mn+(n-p)\cdot(m-n+1)$. 
\end{theorem}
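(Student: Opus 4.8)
The plan is to reduce the statement to a single weight computation and then carry out that computation via the Decomposition Theorem, using the square-matrix case as an anchor. First I would record that the $\D$-module underlying $H^{1+(n-p)\cdot(m-n)}_{\ZA}(\XA,\mc{O}_{\XA})$ is $D_p=\mc{L}(\ZA_p,\XA)$, which is simple (the content of \eqref{eq:Dp-as-loccoh} together with its references). A mixed Hodge module whose underlying perverse sheaf is simple is automatically pure: the graded pieces of its weight filtration are pure, hence semisimple, and by faithfulness of the functor $\mathrm{rat}$ to perverse sheaves only one of them can be nonzero. Thus $D_p$ is a pure Hodge module, necessarily of the form $IC^H_{\ZA_p}(\ell_p)$ for a single Tate twist $\ell_p\in\bb{Z}$. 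By Theorem~\ref{thm:Hodge-Dp} the Hodge filtration on $D_p$ is then completely determined by $\ell_p$ (equivalently, by the weight), so the theorem comes down to the identity $\dim\ZA_p-2\ell_p=mn+(n-p)\cdot(m-n+1)$, with $\dim\ZA_p=p\cdot(m+n-p)$.

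To pin down $\ell_p$ I would use the Kempf collapsing $\pi\colon\tl{\ZA}\to\ZA=\ZA_{n-1}$, where $\tl{\ZA}=\{(\phi,\ell)\in\XA\times\mathbb{P}^{n-1}:\ell\subseteq\ker\phi\}$ is smooth --- it is the total space over $\mathbb{P}^{n-1}$ of the homogeneous bundle $\Hom(\bb{C}^n/\mathcal{O}(-1),\bb{C}^m)$ --- and $\pi$ is proper, birational, with fiber $\mathbb{P}^{n-q-1}$ over the rank-$q$ orbit $O_q$. Applying the Decomposition Theorem to $\pi$, the pushforward $R\pi_*\,\bb{Q}^H_{\tl\ZA}[\dim\tl\ZA]$ is a direct sum of shifted pure Hodge modules; since $\pi$ is $\GL$-equivariant and all its fibers are projective spaces, whose cohomology is of Tate type, each summand is a Tate twist of some $IC^H_{\ZA_q}$, $q\le n-1$, with the twists and perverse shifts governed by the Tate classes of the fibers $\mathbb{P}^{n-q-1}$ (equivalently, using that $\ZA$ has rational singularities, $\bb{Q}^H_{\ZA}$ and $\omega^H_{\ZA}$ split off of $R\pi_*\bb{Q}^H_{\tl\ZA}$ and $R\pi_*\omega^H_{\tl\ZA}$). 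Feeding this into the localization triangle $i_*i^!\mc{O}^H_{\XA}\to\mc{O}^H_{\XA}\to Rj_*j^*\mc{O}^H_{\XA}\to$ for $j\colon\XA\setminus\ZA\hookrightarrow\XA$, together with Saito's identification of the cohomology sheaves of $i_*i^!\mc{O}^H_{\XA}$ with the local cohomology modules, the Hodge module $D_p$ is built (after the Tate twist and shift imposed by $i^!$) from these Tate-twisted copies of the $IC^H_{\ZA_q}$; being pure with support exactly $\ZA_p$, it is forced to be the copy of $IC^H_{\ZA_p}$ occupying the cohomological degree $1+(n-p)\cdot(m-n)$ dictated by \eqref{eq:Dp-as-loccoh}. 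Reading off the single Tate twist attached to that copy --- using $\mathrm{wt}(IC^H_{\ZA_p})=\dim\ZA_p$ and the contribution of $H^{\bullet}(\mathbb{P}^{n-p-1})$ --- yields $\ell_p$, hence the weight $mn+(n-p)\cdot(m-n+1)$. As a normalization check I would run the same bookkeeping for $m=n$ and verify that it recovers $\gr^W_{n^2+n-p}\mc{O}_{\XA}(*\ZA)=IC^H_{\ZA_p}\bigl(-\binom{n-p+1}{2}\bigr)$ from Theorem~\ref{thm:weight-filtration}, which fixes all the shift conventions.

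The main obstacle is making the Decomposition Theorem for $\pi$ genuinely explicit: determining precisely which Tate-twisted copies of each $IC^H_{\ZA_q}$ occur in $R\pi_*\bb{Q}^H_{\tl\ZA}$ and in which (cohomological, weight) bidegrees, and then matching the coherent local cohomology module $D_p$ --- carrying the Hodge filtration from Saito's construction --- to the correct summand rather than to a neighbouring twist. Here $\GL$-equivariance is what keeps the problem finite: the perverse cohomology sheaves ${}^p\mathcal{H}^i(R\pi_*\bb{Q}^H_{\tl\ZA})$ are controlled by the cohomology of Grassmannians, and the rigidity in Theorem~\ref{thm:Hodge-Dp} --- that on a simple $\GL$-equivariant $\D$-module the Hodge filtration is unique up to a weight-determined shift --- means that once the weight is correctly identified, nothing further remains to check. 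With the square-matrix case as an anchor, the leftover computation is essentially combinatorial.
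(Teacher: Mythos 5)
Your first paragraph is correct and agrees with the paper: by \eqref{eq:Dp-as-loccoh} the underlying $\D$-module is the simple module $D_p$, so the mixed Hodge module is pure and equal to $IC_{\ZA_p}^H(\ell_p)$ for a single Tate twist, and by Theorem~\ref{thm:Hodge-Dp} everything reduces to computing that twist (equivalently the weight). The gap is in the mechanism you propose for computing it. You resolve the \emph{closed} subvariety $\ZA=\ZA_{n-1}$ by $\pi\colon\tl\ZA\to\ZA$ and apply the Decomposition Theorem to $R\pi_*\bb{Q}^H_{\tl\ZA}$, but the localization triangle requires knowledge of $Rj_*j^*\mc{O}^H_{\XA}$ (equivalently $i_*i^!\mc{O}^H_{\XA}$), and a resolution of the closed complement does not compute either of these. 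The Decomposition Theorem gives you the semisimple summands of $R\pi_*\bb{Q}^H_{\tl\ZA}$; it does not give you the full mixed object $\bb{Q}^H_{\ZA}$ (whose dual, up to shift and twist, is $i^!\mc{O}^H_{\XA}$), since the cone of $\bb{Q}^H_{\ZA}\to R\pi_*\bb{Q}^H_{\tl\ZA}$ is an unknown object supported on smaller strata, and rational singularities only splits off the degree-zero piece. Moreover, even if you knew all simple constituents of $i^!\mc{O}^H_{\XA}$ up to Tate twist --- which already follows from equivariance and purity, with no resolution needed --- the step ``reading off the single Tate twist attached to that copy'' is precisely the content of the theorem, and the fibers $\mathbb{P}^{n-q-1}$ of $\pi$ control the stalks of $R\pi_*\bb{Q}^H_{\tl\ZA}$, not the twists appearing in $i^!\mc{O}^H_{\XA}$. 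The $m=n$ normalization check cannot rescue this, because for $m=n$ the support is a divisor and the local cohomology sits in degree $1$, so it does not calibrate the degrees $1+(n-p)(m-n)$ for $m>n$.

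The paper's actual route supplies exactly the missing link: it factors the open immersion $f\colon U\to\XA$ as $U\overset{\iota}{\to}Y\overset{\pi}{\to}\XA$ through a partial compactification $Y=\bb{A}_{\bb{G}(n;V_1)}(\mc{Q}_n\oo V_2)$ that \emph{contains} $U$ and in which the boundary is a divisor, locally a determinant hypersurface in $n\times n$ matrices over the Grassmannian. Theorem~\ref{thm:weight-filtration} then gives the weight filtration of $\iota_+\mc{O}_U^H$ with graded pieces $IC^H_{\ZA_s^Y}(-\binom{n-s+1}{2})$, and the Decomposition Theorem is applied to $\pi\colon Y\to\XA$ (Theorems~\ref{thm:decomp-detl} and~\ref{thm:decomp-DpY}). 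The key Lemma~\ref{lem:Di-in-Rpi-DYp}(c) shows that $D_p$ occurs in $\mc{H}^{(n-p)(m-n)}(\pi_+D_s^Y)$ only for $s=p$, so the spectral sequence of the weight filtration isolates a unique summand whose weight is $j+(mn+n-p)=mn+(n-p)(m-n+1)$. If you want to keep your resolution $\tl\ZA$ of the boundary, you would still need an analogue of this divisor-compactification step (or a full inductive computation of $\bb{Q}^H_{\ZA}$ over the rank strata) to pin down $\ell_p$.
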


To explain the proof strategy for Theorem~\ref{thm:hodge-filtration-loccoh}, and the implicit choice of Hodge structure on local cohomology, we introduce some notation: given a smooth variety $X$ we write $\mc{O}_X^H=IC_X^H$ for the trivial Hodge module on $X$; for a morphism $f$ between smooth varieties we write $f_+$ for the direct image functor on the derived category of mixed Hodge modules (and use the same notation for the corresponding $\D$-module direct image functor). We let $U=O_n$ denote the dense orbit of nonsingular matrices, and write $f:U\lra\XA$ for the inclusion map. When $\XA\simeq\bb{C}^{n\times n}$ we have $f_+\mc{O}_U^H = \mc{O}_{\XA}(*\ZA)$, which gives the mixed Hodge module structure that was implicit in our earlier discussion. When $\XA\simeq\bb{C}^{m\times n}$, $m>n$, we have $\mc{H}^0(f_+\mc{O}_U^H) = \mc{O}_{\XA}^H$, and 
\begin{equation}\label{eq:Rf+=loccoh}
 \mc{H}^j(f_+\mc{O}_U^H) = H^{1+j}_{\ZA}(\XA,\mc{O}_{\XA})\mbox{ for }j>0.
\end{equation}
To understand $f_+\mc{O}_U^H$, we factor $f$ as a composition
\begin{equation}\label{eq:factor-f}
 U \overset{\iota}{\lra} Y \overset{\pi}{\lra} \XA
\end{equation}
where $\iota$ is an affine open immersion, $\pi$ is projective birational, and $Y$ is locally identified with a space of $n\times n$ matrices over an $n\cdot(m-n)$-dimensional base. More precisely, we consider the Grassmannian $\bb{G}=\bb{G}(n;m)$ with tautological rank $n$ bundle $\mc{Q}$, and let $Y = \bb{A}_{\bb{G}}(\mc{Q} \oo \bb{C}^n)$ denote the corresponding geometric vector bundle. Writing $\ZA^Y=Y\setminus U$ we have $\iota_+\mc{O}_U^H=\mc{O}_Y(*\ZA^Y)$, which we understand using the case of square matrices: we have a rank stratification on $Y$ by subvarieties $\ZA^Y_p$, and the composition factors for the weight filtration on $\mc{O}_Y(*\ZA^Y)$ are given by $D^Y_p=\mc{L}(\ZA^Y_p,Y)$ (with the appropriate Hodge structure). The conclusion now follows from a spectral sequence argument combined with the following explicit consequence of the Decomposition Theorem. For $a\geq b$ we consider the $q$-binomial coefficients
\[{a\choose b}_q = \frac{(1-q^a)\cdot(1-q^{a-1})\cdots (1-q^{a-b+1})}{(1-q^b)\cdot(1-q^{b-1})\cdots (1-q)},\]
and make the convention that ${a\choose b}_q=0$ if $a<b$.

\begin{theorem}\label{thm:decomp-DpY}
 For each $0\leq p\leq n$, the $\D$-module direct image of $D_p^Y$ is given by the formal identity
 \[\sum_{j\in\bb{Z}} \mc{H}^j(\pi_{+}D_p^Y)\cdot q^j = q^{-(n-p)\cdot(m-n)}\cdot{m-p\choose n-p}_{q^2}\cdot \sum_{i=0}^p D_i \cdot q^{-(m-n-p+i)\cdot(p-i)}\cdot{m-n\choose p-i}_{q^2}.\]
\end{theorem}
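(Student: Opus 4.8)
The plan is to compute the direct image $\pi_+ D_p^Y$ by exploiting the fact that $\pi$ is a locally trivial fibration over each determinantal stratum, so that the Decomposition Theorem expresses $\pi_+ D_p^Y$ as a direct sum of shifted copies of the simple Hodge modules $IC_{\ZA_i}^H$ with multiplicities controlled by the cohomology of the fibers. First I would recall the geometry of $Y = \bb{A}_{\bb{G}}(\mc{Q}\oo\bb{C}^n)$: a point of $Y$ over a subspace $[W]\in\bb{G}(n;m)$ is a linear map $\bb{C}^n \to W \simeq \bb{C}^n$, and $\pi$ sends it to the composite $\bb{C}^n\to W\hra\bb{C}^m$, an element of $\XA = \bb{C}^{m\times n}$. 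The fiber $\pi^{-1}(\phi)$ over a matrix $\phi$ of rank $i$ consists of those $W$ containing the column space of $\phi$ (an $i$-dimensional space), i.e. $\pi^{-1}(\phi) \simeq \bb{G}(n-i;m-i)$. Thus $\pi$ restricted to $\ZA^Y_p\setminus\ZA^Y_{p-1}$ is, over the rank-$i$ locus $O_i^{\XA}$, a fiber bundle whose fiber is the rank-$(p-i)$ locus inside $\mathrm{Hom}$ from $\bb{C}^n$ to an $(n-i)$-bundle on $\bb{G}(n-i;m-i)$ — after stripping off the closure conditions, one sees $D_p^Y$ restricted to $\pi^{-1}(O_i^{\XA})$ is (up to shift) pulled back from the base.

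Next I would apply the Decomposition Theorem in the category of Hodge modules: since $\pi$ is projective and $D_p^Y$ underlies the pure Hodge module $IC_{\ZA^Y_p}^H$ (with a Tate twist), $\pi_+ D_p^Y$ decomposes as $\bigoplus_{i,j} IC_{\ZA_i}^H(\text{twist}) [-j]^{\oplus c_{i,j}}$. The multiplicities $c_{i,j}$ are determined by restricting to the open stratum $O_i^{\XA}$ and computing $\mc{H}^\bullet$ of the direct image of the restricted $D$-module there; by the local product structure this reduces to the cohomology of the fiber of the Grassmannian bundle $\bb{G}(n-i;m-i)$, twisted by the contribution of the "determinantal direction" inside the vector bundle fiber. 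The cohomology ring of $\bb{G}(n-i;m-i)$ has Poincaré polynomial $\binom{m-i}{n-i}_{q^2}$, and the determinantal stratum inside the $\mathrm{Hom}$-fiber contributes the factor $\binom{m-n}{p-i}_{q^2}$ with the shift $q^{-(m-n-p+i)(p-i)}$ coming from its codimension; the overall prefactor $q^{-(n-p)(m-n)}$ accounts for the difference in dimension between $Y$ and $\XA$ (equivalently, the relative dimension $n(m-n)$ of $\pi$ versus the codimension of $\ZA_p^Y$). To separate the two $q$-binomials I would use the decomposition $\binom{m-p}{n-p}_{q^2} = \sum$ over a flag parameter, matching $\bb{G}(n;m)$-data; but more directly, I would verify the stated identity by checking it on each open stratum $O_i^{\XA}$ using proper base change and the known Lefschetz/hard-Lefschetz structure on the fiber cohomology, which forces the decomposition to be a direct sum of shifts with no cancellation.

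**The main obstacle** I anticipate is making precise the claim that over each stratum $O_i^{\XA}$ the restricted direct image is a shifted sum of \emph{constant} local systems (so that the only contribution is $D_i$ and not some nontrivial variation), together with correctly bookkeeping the Tate twists so that the purity/weight statement of Theorem~\ref{thm:weight-filtration}-type is consistent. Concretely, one must show the Grassmannian bundle $\pi^{-1}(O_i^{\XA}) \to O_i^{\XA}$ has trivial monodromy on cohomology — this follows because the bundle is $\GL$-equivariant and $O_i^{\XA}$ is a single $\GL$-orbit with connected stabilizer acting trivially on the fiber cohomology (the fiber cohomology being generated by Chern classes of the tautological bundles, which are $\GL$-equivariant) — and then invoke the semisimplicity in the Decomposition Theorem to conclude the sum is as claimed with the precise $q$-powers. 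Once the stratum-by-stratum comparison is set up, matching coefficients is a routine (if lengthy) identity of $q$-binomials, which I would not belabor; the content is entirely in the geometry of the fibration and the rigidity forced by $\GL$-equivariance.
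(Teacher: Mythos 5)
Your overall strategy --- apply the Decomposition Theorem to the projective $\GL$-equivariant map $\pi$ and pin down the multiplicities stratum by stratum --- is reasonable, and your equivariance argument for why only trivial local systems (hence only the $D_i$) can appear is correct and matches the paper. But there are two genuine gaps where you declare the computation ``routine,'' and they are precisely where the content of the theorem lies. First, the multiplicity of $IC_{\ZA_i}$ in $\pi_+D_p^Y$ is \emph{not} read off directly from the cohomology of the fiber over a point of $O_i$: the stalk of $R\pi_*$ at $x\in O_i$ also receives contributions from every summand $IC_{\ZA_{i'}}$ with $i'>i$, whose stalks along $O_i$ are nontrivial (they are parabolic Kazhdan--Lusztig polynomials, equal to $q$-binomial coefficients by Lascoux--Sch\"utzenberger). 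Extracting the $f_i(q)$ therefore requires an inductive subtraction down the strata, using these stalk formulas as input; this is the system of identities the paper solves in the proof of Theorem~\ref{thm:decomp-detl}, and it is not a formality. Second, on the source side, for $p<n$ the variety $\ZA_p^Y$ is singular, so the stalk of $R\pi_*(D_p^Y)$ over $O_i$ is the hypercohomology of the fiber $\bb{G}(n-i;m-i)$ with coefficients in the \emph{restriction of $IC_{\ZA_p^Y}$}, not in the constant sheaf; your phrase ``twisted by the contribution of the determinantal direction'' points at this but supplies no formula, and again the missing ingredient is the same Kazhdan--Lusztig data.

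The paper's proof takes a different and cleaner route that sidesteps the second issue entirely: it introduces $T=\bb{A}_{\bb{F}(n,p;V_1)}(\mc{Q}_p\oo V_2)$ over the two-step flag variety, observes that $s\colon T\to Y$ is a \emph{small} resolution of $\ZA_p^Y$ (so $s_+\mc{O}_T=D_p^Y$ with no correction terms), and factors $\pi\circ s=\pi_p\circ g$ where $g$ is a smooth $\bb{G}(n-p;m-p)$-bundle onto the smooth variety $Y_p$. This converts the problem into computing $\pi_{p+}\mc{O}_{Y_p}$ (Theorem~\ref{thm:decomp-detl}), where the only IC stalk computations needed are for the targets $IC_{\ZA_i}$, and it explains the clean product form of the answer: the prefactor $q^{-(n-p)(m-n)}\binom{m-p}{n-p}_{q^2}$ is the Poincar\'e polynomial of the fiber of $g$, not of the fiber of $\pi$ as your sketch suggests. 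To complete your approach you would need to (i) state and justify the stalk formula for $IC_{\ZA_{i}}$ along $O_k$ and for $IC_{\ZA_p^Y}$ along $O_i^Y$, and (ii) carry out the triangular inversion that recovers the multiplicities; as written, the proposal asserts the answer rather than deriving it.
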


Implicit in the above formula is the fact that $\mc{H}^j(\pi_{+}D_p^Y)$ is semisimple, and its decomposition as a direct sum of copies of the modules $D_0,\cdots,D_p$ is obtained by equating the coefficients in the formal identity. As a sanity check, we consider the case when $p=n$ and $m=n+1$, when we have that $D_p^Y = \mc{O}_Y$ and $\pi$ is a semismall map with relevant strata $O_n$ and $O_{n-1}$ \cite[Lecture~3]{deCat-notes}, \cite{deCat-Mig}. The $q$-binomials ${m-n\choose p-i}_{q^2}$ are non-zero only for $i=n$ and $i=n-1$, and the formula in Theorem~\ref{thm:decomp-DpY} becomes
\[ \pi_{+}\mc{O}_Y = D_n \oplus D_{n-1}.\]
Specializing further to the case $n=1$, we get that $\XA=\bb{A}^2$ and $Y$ is the blow-up of $\XA$ at the origin, $D_1=\mc{O}_{\XA}$ and $D_0$ is the simple $\D$-module supported at the origin, a familiar example of the Decomposition Theorem.

\medskip

\noindent{\bf Organization.} In Section~\ref{sec:prelim} we recall basic notions from representation and $\D$-module theory, and some properties of spaces of matrices. In Section~\ref{sec:Hodge-Dp} we characterize the possible Hodge filtrations for a simple equivariant $\D$-module on $m\times n$ matrices. In Section~\ref{sec:hodge-det} we determine the weight and Hodge filtrations on the localization $S_{\det}$ at the determinant, and deduce the description of the Hodge ideals for the determinant hypersurface. We end with a discussion of the Hodge structure on local cohomology in Section~\ref{sec:hodge-loccoh}. 
\section{Preliminaries}\label{sec:prelim}

In this section we establish some notation and review basic facts that will be needed in the paper, regarding spaces of matrices, affine bundles, Grassmannians and flag varieties, representations of the general linear group, equivariant $\D$-modules, and the Hodge filtration on an intersection cohomology $\D$-module. We work throughout with varieties of finite type over $\bb{C}$. For any such variety $X$, we let $d_X$ denote its dimension. All our $\D$-modules are left $\D$-modules. Tensor products are considered over $\bb{C}$ unless otherwise stated. 

\subsection{Spaces of matrices, conormal varieties}\label{subsec:matrices}

Consider positive integers $m\geq n$ and complex vector spaces $V_1,V_2$, $\dim(V_1)=m$, $\dim(V_2)=n$. We write $S=\Sym(V_1\oo V_2)$ for the symmetric algebra of $V_1\oo V_2$, and let $\XA = \op{Spec}(S)$ denote the corresponding affine space, whose $\bb{C}$-points are parametrized by $V_1^{\vee}\oo V_2^{\vee}$, where $V^{\vee}$ denotes the dual of a vector space $V$. A choice of bases for $V_1,V_2$ induces identifications $S \simeq \bb{C}[x_{i,j}]$ and $\XA\simeq\bb{C}^{m\times n}$ (the space of $m\times n$ matrices). We write $\GL(V)$ for the group of invertible linear transformations of a vector space $V$, and let $\GL=\GL(V_1)\times\GL(V_2)$. There is a natural $\GL$-action on $\XA$, with orbits $O_p$ consisting of matrices of rank $p$, $p=0,\cdots,n$. We write $\ZA_p=\ol{O}_p$ for the corresponding orbit closures. If we let $J_p\subseteq S$ denote the ideal generated by the $p\times p$ minors of the matrix $(x_{i,j})$ (which does not depend on the choice of bases in $V_1,V_2$), then the defining ideal of $\ZA_p$ is $J_{p+1}$. We write $d_p$ (resp. $c_p$) for the dimension (resp. codimension) of $\ZA_p$ (in $\XA$), which are computed by
\begin{equation}\label{eq:dp-cp}
 d_p = p\cdot(m+n-p)\quad\mbox{ and }\quad c_p=(m-p)\cdot(n-p).
\end{equation}

We let $S'=\Sym(V_1^{\vee}\oo V_2^{\vee})$ and $\XA'=\op{Spec}(S')$, and define $O'_p$, $\ZA'_p$, $J'_p$ in analogy to the previous paragraph. A choice of basis for $V_1,V_2$ determines dual bases on $V_1^{\vee},V_2^{\vee}$, and an identification $S'\simeq\bb{C}[y_{i,j}]$. The cotangent space $T^*\XA$ is naturally identified with $\XA\times\XA' = \op{Spec}(A)$, where $A=S\oo S'$ . We write $\pi,\pi'$ for the projections from $T^*\XA$ to the two factors. We write $C_p$ for the \defi{conormal variety of $\ZA_p$}, which is the closure in $T^*\XA$ of the conormal bundle to $O_p$. As a set, it consists of (see \cite{strickland})
\begin{equation}\label{eq:explicit-Cp}
 C_p = \{(x,x')\in \ZA_p \times \ZA'_{n-p} : xx'=0,\ x'x=0\},
\end{equation}
where $xx'$ and $x'x$ denote matrix multiplications, or in more invariant terms, are defined by the contraction maps from $V_1^{\vee}\oo V_2^{\vee} \oo V_1 \oo V_2$ to $V_1^{\vee} \oo V_1$ and $V_2^{\vee} \oo V_2$, induced by the natural pairings $V_i^{\vee}\oo V_i\to\bb{C}$.

It follows from (\ref{eq:explicit-Cp}) that $\pi(C_p)=\ZA_p$ and $\pi'(C_p)=\ZA'_{n-p}$. Therefore, if we let $I(C_p)\subseteq A$ denote the defining ideal of $C_p$, and if we think of $S,S'$ as subrings of $A$ in the natural way, then
\begin{equation}\label{eq:proj-ICp}
 I(C_p) \cap S = J_{p+1}\quad\mbox{ and }\quad I(C_p) \cap S' = J'_{n-p+1}.
\end{equation}

\subsection{Representations of the general linear group}\label{subsec:GL-reps}

For a vector space $V\simeq\bb{C}^N$ we have $\GL(V)\simeq\GL_N(\bb{C})$ and the irreducible finite dimensional $\GL(V)$-representations are classified by the set of \defi{dominant weights}
\[ \bb{Z}^N_{\dom} = \{ \ll \in\bb{Z}^N : \ll_1 \geq \ll_2 \geq \cdots \geq \ll_N\}.\]  
We write $\bb{S}_{\ll}V$ for the irreducible representation with \defi{highest weight} $\ll\in\bb{Z}^N_{\dom}$, and have for instance
\[ \bb{S}_{\ll}V = \Sym^d V\mbox{ when }\ll=(d,0^{N-1}),\ d\geq 0,\mbox{ and } \bb{S}_{\ll}V = \bw^r V\mbox{ when }\ll=(1^r,0^{N-r}),\ 0\leq r\leq N.\]
Taking duals, we obtain isomorphisms
\begin{equation}\label{eq:lam-dual}
 \bb{S}_{\ll}(V^{\vee}) \simeq (\bb{S}_{\ll}V)^{\vee} \simeq \bb{S}_{\ll^{\vee}}V,\mbox{ where }\ll^{\vee} = (-\ll_N,-\ll_{N-1},\cdots,-\ll_1).
\end{equation}

When $\ll_N\geq 0$ we say that $\ll$ is a \defi{partition}, which we typically write by omitting any trailing zeros. We write $\mc{P}_N=\{\ll\in\bb{Z}^N_{\dom}:\ll_N\geq 0\}$ for the set of partitions with at most $N$ parts, and think of $\mc{P}_N$ as a subset of $\mc{P}_{N+1}$ by setting $\ll_{N+1}=0$ for $\ll\in\mc{P}_N$. With these conventions, we have $\mc{P}_n\subseteq\mc{P}_m$ for $m\geq n$, and if $V_1,V_2$ are as in Section~\ref{subsec:matrices}, then by Cauchy's formula \cite[Corollary 2.3.3]{weyman} we get a decomposition into irreducible $\GL$-representations
\begin{equation}\label{eq:decomp-S}
 S = \Sym(V_1\oo V_2) = \bigoplus_{\ll\in\mc{P}_n} \bb{S}_{\ll}V_1 \oo \bb{S}_{\ll}V_2.
\end{equation}
The component $\bw^p V_1\oo\bw^p V_2$ in (\ref{eq:decomp-S}) occurs for $\ll=(1^p)$ and corresponds to the linear span of the $p\times p$ minors of $(x_{i,j})$, the generators of the ideal $J_p$. Moreover, we have that
\begin{equation}\label{eq:Slam-in-Jp}
 \bb{S}_{\ll}V_1 \oo \bb{S}_{\ll}V_2 \subset J_p \Longleftrightarrow \ll_p\geq 1.
\end{equation}
  
\subsection{Hodge filtration on an IC module}\label{subsec:Hodge-IC}

In this section $X$ is a smooth variety and $Z\subseteq X$ is an irreducible closed subvariety. We write $\mc{L}(Z,X)$ for the \defi{intersection cohomology (simple) $\D$-module} corresponding to the trivial local system on the regular part $Z_{reg}$ of $Z$ \cite[Remark~7.2.10]{hottaetal}. We write $IC_Z^H$ for the Hodge module on $X$ corresponding to the trivial variation of Hodge structure on $Z_{reg}$, so that $IC_Z^H$ is pure of weight $d_Z$ \cite[Section~8.3.3]{hottaetal}. When $Z=X$ we have $\mc{L}(X,X)=\mc{O}_X$ and we write $\mc{O}_X^H$ instead of $IC_X^H$. Every Hodge module on $X$ with underlying $\D$-module $\mc{L}(Z,X)$ is obtained by applying a Tate twist to the trivial variation of Hodge structure: we write $IC_Z^H(k)$ for the resulting Hodge module, which is pure of weight $d_Z-2k$.

\begin{lemma}\label{lem:hodgeF-LZX}
 The Hodge filtration $F_{\bullet}$ for $IC_Z^H(k)$ starts in level $d_X-d_Z+k$, that is,
 \[ F_p(\mc{L}(Z,X)) = 0\mbox{ for }p<d_X-d_Z+k,\quad F_{d_X-d_Z+k}(\mc{L}(Z,X))\neq 0.\]
\end{lemma}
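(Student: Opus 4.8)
The plan is to reduce to the well-understood case of a trivial Hodge module by cutting down to the smooth locus, and then control what happens when we pass from the smooth locus to all of $X$. Write $j : X\setminus \Sing(Z) \hookrightarrow X$ for the open immersion complementary to the singular locus of $Z$; note $Z_{reg} = Z\cap (X\setminus\Sing(Z))$ is smooth and closed in $X\setminus\Sing(Z)$ of codimension $c := d_X - d_Z$. On $X\setminus\Sing(Z)$, the restriction of $IC_Z^H(k)$ is the Tate twist by $k$ of the trivial Hodge module $\mathbb{Q}^H_{Z_{reg}}[d_Z]$ pushed forward along the closed embedding $Z_{reg}\hookrightarrow X\setminus\Sing(Z)$. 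For a smooth closed subvariety $i:W\hookrightarrow X'$ of codimension $c$, the underlying $\D$-module of $i_+\mathcal{O}_W^H$ is the local cohomology module $\mathcal{H}^c_W(\mathcal{O}_{X'})$, whose Hodge filtration starts exactly in level $c$ (this is the normal crossings / smooth divisor computation iterated, or simply Saito's formula for $i_+$ of a trivial Hodge module); after a Tate twist by $k$ the Hodge filtration starts in level $c+k = d_X - d_Z + k$. So on the open set $X\setminus\Sing(Z)$ the assertion of the lemma holds, and in particular $F_{d_X-d_Z+k-1} = 0$ while $F_{d_X-d_Z+k}$ is a nonzero line bundle on $Z_{reg}$ there.

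The first half of the lemma — the vanishing $F_p(\mathcal{L}(Z,X)) = 0$ for $p < d_X - d_Z + k$ — then follows because the Hodge filtration is compatible with restriction to open sets: $j^* F_\bullet(IC_Z^H(k)) = F_\bullet(j^* IC_Z^H(k))$, and $\mathcal{L}(Z,X)$ has no nonzero subsheaf supported on $\Sing(Z)$ (it is the intermediate extension of its restriction, hence has no sub- or quotient-module supported on the smaller set, $j_{!*}$ being a simple object with no sections supported on the complement of the open locus). Concretely, if $F_p(\mathcal{L}(Z,X))\neq 0$ for some $p < d_X-d_Z+k$, its restriction to $X\setminus\Sing(Z)$ would have to vanish by the open-set case, so $F_p(\mathcal{L}(Z,X))$ would be a nonzero coherent subsheaf of $\mathcal{L}(Z,X)$ supported on $\Sing(Z)$, contradicting the fact that $\mathcal{L}(Z,X) = j_{!*}(j^*\mathcal{L}(Z,X))$ has no nonzero subobject supported on $\Sing(Z)$.

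For the second half — nonvanishing of $F_{d_X-d_Z+k}(\mathcal{L}(Z,X))$ — I would argue that $j^* F_{d_X-d_Z+k}(\mathcal{L}(Z,X))$ equals $F_{d_X-d_Z+k}$ of the restriction, which we showed is nonzero; since restriction to an open set cannot turn a zero sheaf into a nonzero one, $F_{d_X-d_Z+k}(\mathcal{L}(Z,X))$ is itself nonzero. The cleanest formulation: the natural map $F_\bullet(IC_Z^H(k)) \to j_* j^* F_\bullet(IC_Z^H(k))$ is injective (as $\mathcal{L}(Z,X)$ injects into $j_*j^*\mathcal{L}(Z,X)$, the module having no sections supported on $\Sing(Z)$), so the generation level and starting level of $F_\bullet$ are detected on the open set.

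The main obstacle is making precise the compatibility of Saito's Hodge filtration with restriction to a Zariski-open subset at the level of the starting index, and the fact that the underlying $\D$-module of an intermediate-extension Hodge module has no nonzero coherent $\mathcal{O}_X$-submodule supported on the boundary. Both are standard facts in the theory of (mixed) Hodge modules — the first because $j^*$ is exact and strict for the Hodge filtration on the underlying filtered $\D$-module, the second because $j_{!*}$ produces a simple (hence in particular "clean" along $\Sing(Z)$) regular holonomic $\D$-module — but they should be cited carefully (e.g. Saito's articles on mixed Hodge modules, or \cite{hottaetal} for the $\D$-module statements). Once these are in hand, the computation of the starting level for $i_+$ of a trivial Hodge module along a smooth closed embedding is a short explicit calculation with the local cohomology module $\mathcal{H}^c_W(\mathcal{O}_{X'})$ and its order filtration, which I would not grind through here.
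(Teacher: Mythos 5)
Your proposal is correct and follows essentially the same route as the paper: reduce to the smooth case by restricting to an open set meeting $Z$ in $Z_{reg}$, compute the starting level $d_X-d_Z+k$ there via the direct image of the trivial Hodge module along a smooth closed embedding, and rule out lower filtration levels using that the simple module $\mc{L}(Z,X)$ admits no nonzero ($\D$-)submodule supported on $Z_{sing}$ (the paper phrases this via $\mc{H}^0_{Z_{sing}}(\mc{L}(Z,X))$ being a proper $\D$-submodule). The only cosmetic difference is that the paper treats $Z=X$ and $Z$ smooth as explicit base cases and cites the filtration formula for $i_+$ from Hotta--Takeuchi--Tanisaki rather than the local cohomology description.
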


\begin{proof}
 Since the Tate twist $(k)$ replaces $F_{\bullet}$ by $F_{\bullet-k}$, it suffices to consider the case $k=0$. Suppose first that $Z=X$. The Hodge filtration for $\mc{O}_X^H$ is given by $F_p(\mc{O}_X)=0$ for $p<0$ and $F_p(\mc{O}_X)=\mc{O}_X$ for $p\geq 0$, so the conclusion follows. Suppose next that $Z$ is smooth, so that $IC_Z^H = i_+\mc{O}_Z^H$, where $i:Z\hra X$ is the inclusion. The conclusion now follows from the description of the filtration on the direct image in \cite[Section~8.3.3]{hottaetal} (we have $F_q(D_{X\leftarrow Z}) \neq 0$ if and only if $q\geq 0$, and $F_{p-q+d_Z-d_X}(\mc{O}_Z)\neq 0$ if and only if $p-q+d_Z-d_X\geq 0$).
 
 Finally, consider the general case when $Z\subseteq X$ is an irreducible subvariety, and let $U\subset X$ be an open subset such that $U\cap Z = Z_{reg}$. By the previous discussion, we have $F_p(\mc{L}(Z_{reg},U))\neq 0$ if and only if $p\geq d_U-d_{Z_{reg}}=d_X-d_Z$. Since $F_p(\mc{L}(Z_{reg},U)) = F_p(\mc{L}(Z,X))_{|_U}$, this implies that $F_p(\mc{L}(Z,X))\neq 0$ for $p\geq d_X-d_Z$. If $F_p(\mc{L}(Z,X))\neq 0$ for some $p<d_X-d_Z$, then $F_p(\mc{L}(Z,X))$ has support contained in the proper closed subset $Z_{sing} = Z\setminus Z_{reg}$ of $Z$, and therefore the local cohomology module $\mc{H}_{Z_{sing}}^0(\mc{L}(Z,X))$ is a proper $\D$-submodule of the simple $\D$-module $\mc{L}(Z,X)$, a contradiction.
\end{proof}

\subsection{$\GL$-equivariant $\D$-modules on $\bb{C}^{m\times n}$}\label{subsec:equiv-Dmods}

We let $\XA\simeq\bb{C}^{m\times n}$ as in Section~\ref{subsec:matrices}, and consider the category $\op{mod}_{\GL}(\D_{\XA})$ of $\GL$-equivariant (holonomic) coherent $\D$-modules on $\XA$. The simple objects in $\op{mod}_{\GL}(\D_{\XA})$ are the $\D$-modules $D_p = \mc{L}(\ZA_p,\XA)$, $p=0,\cdots,n$. Their $\GL$-structure is given by \cite[Section~5]{raicu-dmods}
\begin{equation}\label{eq:char-Dp}
 D_p = \bigoplus_{\ll\in W^p} \bb{S}_{\ll(p)}V_1 \oo \bb{S}_{\ll}V_2,
\end{equation}
where $\ll(p) = (\ll_1,\cdots,\ll_p,(p-n)^{m-n},\ll_{p+1}+(m-n),\cdots,\ll_n+(m-n))$, and
\begin{equation}\label{eq:def-Wp}
 W^p = \{ \ll\in\bb{Z}^n_{\dom} : \ll_p\geq p-n,\ \ll_{p+1}\leq p-m\},\mbox{ for }p=0,\cdots,n.
\end{equation}
We note that in the special case $p=n$ we have $D_n=S$, $W^n=\mc{P}_n$, and (\ref{eq:char-Dp}) reduces to Cauchy's formula.

As explained in \cite[Theorem~5.4]{lor-wal}, the category $\op{mod}_{\GL}(\D_{\XA})$ is semisimple when $m\neq n$, and it is an explicit quiver category for $m=n$. In the case when $m=n$, there exists a unique non-trivial extension of $D_p$ by $D_{p+1}$, which is constructed as follows. We write $\det$ for any non-zero generator of the $1$-dimensional representation $\bw^n V_1 \oo \bw^n V_2 \subset S$. After choosing basis on $V_1,V_2$ as before, $\det$ can be identified with the determinant of the matrix of variables $(x_{i,j})$. The localization $S_{\det}$ is an element of $\op{mod}_{\GL}(\D_{\XA})$, and admits a filtration (see \cite[Theorem~1.1]{raicu-dmods})
 \begin{equation}\label{eq:filtration-Sdet}
 0\subsetneq S\subsetneq\langle\det^{-1}\rangle_{\D}\subsetneq\cdots\subsetneq\langle\det^{-n}\rangle_{\D}=S_{\det},
\end{equation}
with associated composition factors $D_0,\cdots,D_n$, where $D_p \simeq \langle\det^{p-n}\rangle_{\D} / \langle\det^{p-n+1}\rangle_{\D}$ for $0\leq p<n$, $D_n\simeq S$. The non-trivial extension of $D_p$ by $D_{p+1}$ arises as the quotient $\langle\det^{p-n}\rangle_{\D} / \langle\det^{p-n+2}\rangle_{\D}$ for $0\leq p\leq n-2$, and as $\langle\det^{-1}\rangle_{\D}$ for $p=n-1$. The filtration (\ref{eq:filtration-Sdet}) completely describes the lattice of submodules of $S_{\det}$.

We notice also that (in the case $m=n$) we have
\begin{equation}\label{eq:Sdet-decomp}
 S_{\det} = \bigoplus_{\ll\in\bb{Z}^n_{\dom}} \bb{S}_{\ll}\bb{C}^n \oo \bb{S}_{\ll}\bb{C}^n.
\end{equation}
The sets $W^0,\cdots,W^n$ in (\ref{eq:def-Wp}) form a partition of $\bb{Z}^n_{\dom}$, reflecting the fact that as a $\GL$-representation, $S_{\det}$ is isomorphic to the direct sum $D_0\oplus\cdots\oplus D_n$. It follows from (\ref{eq:Sdet-decomp}) that every $\GL$-subrepresentation $M\subseteq S_{\det}$ is uniquely determined by a subset of $\bb{Z}^n_{\dom}$, which we denote $\mf{W}(M)$. Moreover, if $M$ is an $S$-submodule of $S_{\det}$ then we have the implication
\begin{equation}\label{eq:W-module-interval}
 \mbox{if }\ll\in\mf{W}(M)\mbox{ and }\mu\geq\ll\mbox{ then }\mu\in\mf{W}(M).
\end{equation}
This property is not satisfied by $\mf{W}(D_p)=W^p$ unless $p=n$, but it is satisfied by $\mf{W}(D_n\oplus\cdots\oplus D_p) = \{\ll\in\bb{Z}^n_{\dom}: \ll_p\geq p-n\}$, since it describes the underlying $\GL$-representation of $\langle\det^{p-n}\rangle_{\D}$. 

\subsection{Affine bundles, Grassmannians, flag varieties}\label{subsec:bundles}

For a coherent locally free sheaf $\mc{E}$ on a variety $B$, we consider the \defi{geometric affine bundle} associated to $\mc{E}$ to be
\[\bb{A}_B(\mc{E}) = \ul{\Spec}_{\mc{O}_B}\Sym(\mc{E}),\quad\mbox{where}\quad\Sym(\mc{E}) = \mc{O}_B \oplus \mc{E} \oplus \Sym^2\mc{E} \oplus \cdots\]
Any surjection $\mc{E}\onto\mc{F}$ induces a closed immersion $\bb{A}_B(\mc{F})\hookrightarrow\bb{A}_B(\mc{E})$. Our main example of affine bundles is
 \[ \XA_B(\mc{E}_1,\mc{E}_2) = \bb{A}_B(\mc{E}_1\oo\mc{E}_2),\mbox{ where }\rk(\mc{E}_i) = r_i,\]
which is locally isomorphic to a space of $r_1\times r_2$ matrices over the base $B$. It then has a natural rank stratification, and we let $\ZA_{B,p}(\mc{E}_1,\mc{E}_2) \subseteq \XA_B(\mc{E}_1,\mc{E}_2)$ denote the loci of rank $\leq p$ matrices. The special case $B=\op{Spec}(\bb{C})$ and $\mc{E}_i=V_i$ recovers our earlier definition of $\XA$ from Section~\ref{subsec:matrices}.
 
We write $\bb{G}(p;V)$ for the Grassmannian parametrizing $p$-dimensional quotients of a vector space $V$, and write $\bb{G}(p;N)$ for $\bb{G}(p;\bb{C}^N)$. We consider the tautological exact sequence on $\bb{G}(p;V)$
\[0\lra \mc{R}_{N-p} \lra V\oo\mc{O}_{\bb{G}(p;V)} \lra \mc{Q}_p \lra 0\]
where $N=\dim(V)$, $\rk(\mc{R}_{N-p})=N-p$, $\rk(\mc{Q}_p)=p$. We will also consider $2$-step partial flag varieties $\bb{F}(n,p;V)$ for $n>p$, and write $\mc{Q}_n$ and $\mc{Q}_p$ for the corresponding tautological quotient sheaves. We note that $\bb{F}(n,p;V)$ can be interpreted as a relative Grassmannian in two ways: parametrizing rank $p$ quotients of the sheaf $\mc{Q}_n$ on $\bb{G}(n;V)$, in which case we get a $\bb{G}(p,n)$-bundle over $\bb{G}(n;V)$; or, as parametrizing rank $(n-p)$ quotients of the sheaf $\mc{R}_{N-p}$ on $\bb{G}(p;V)$, in which case we get a $\bb{G}(n-p;N-p)$-bundle on $\bb{G}(p;V)$. These two perspectives will be important in Section~\ref{sec:hodge-loccoh}.

\section{Hodge filtrations on the simple modules $D_p$}\label{sec:Hodge-Dp}

The goal of this section is to characterize the possible Hodge filtrations on a Hodge module whose underlying $\D$-module is $D_p$. We recall the $\GL$-structure of $D_p$ given in (\ref{eq:char-Dp}), and single out the weight 
\begin{equation}\label{eq:def-delta-p}
\delta^p = ((p-n)^p,(p-m)^{n-p})\in W^p,
\end{equation}
noting that $\delta^p(p) = ((p-n)^m)$. Given a $\GL$-subrepresentation $N\subseteq D_p$, we define 
\[\mf{W}(N) = \{ \ll\in\bb{Z}^n_{\dom} : \bb{S}_{\ll(p)}V_1 \oo \bb{S}_{\ll}V_2 \subseteq N\},\]
and note that $\mf{W}(N)$ completely identifies $N$. We also recall the (co)dimension of $\ZA_p$ in $\XA$ from (\ref{eq:dp-cp}). It follows from (\ref{eq:def-Wp}) that if $\ll\in W^p$ then
\[ \ll_{p+1}+\cdots+\ll_n \leq (n-p)\cdot (p-m) = -c_p.\]
To state the main result of this section, we consider the partitioning of $W^p$ as
\begin{equation}\label{eq:partition-Wp}
 W^p = \bigsqcup_{d\geq 0} W^p_d,\mbox{ where }W^p_d = \{\ll\in W^p : \ll_{p+1}+\cdots+\ll_n = -d - c_p\}.
\end{equation}
Using the natural partial order on $\bb{Z}^n$ ($\a\geq\b$ if and only if $\a_i\geq\b_i$ for all $i$), we observe that $W^p_d$ contains finitely many minimal elements with respect to this order, indexed by partitions $\mu\in\mc{P}_{n-p}$ of size $|\mu|=d$. More precisely, these minimal elements are (using the notation in (\ref{eq:lam-dual}))
\begin{equation}\label{eq:def-ll-p-mu}
 \ll^{p,\mu} = \delta^p + \mu^{\vee} = ((p-n)^p,p-m-\mu_{n-p},\cdots,p-m-\mu_1).
\end{equation}

\begin{theorem}\label{thm:Hodge-Dp}
 Suppose that $\mc{M}$ is a Hodge module with underlying $\D$-module $D_p$, and write $F_{\bullet}$ for the Hodge filtration on $D_p$, and $\gr_{\bullet}$ for the associated graded module with respect to $F_{\bullet}$.
 
 (a) There exists (a unique) $l_0\in\bb{Z}$ such that $\mf{W}(\gr_{l_0}(D_p))$ contains $\delta^p$.
 
 (b) We have $F_l(D_p)=0$ for $l<l_0$ and $\mf{W}(\gr_l(D_p)) = W_{l-l_0}^p$ for $l\geq l_0$.
 
 (c) $\mc{M}$ is a pure Hodge module of weight $mn+c_p-2l_0$.
\end{theorem}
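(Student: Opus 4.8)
The plan is to exploit the $\GL$-equivariance together with the basic structure theory of Hodge modules: since $D_p = \mc{L}(\ZA_p,\XA)$ is a simple $\D$-module, any Hodge module $\mc{M}$ with underlying $\D$-module $D_p$ must be (up to Tate twist) the pure Hodge module $IC_{\ZA_p}^H$; this already gives purity and identifies the weight as $d_{\ZA_p} - 2k = d_p - 2k$ for the appropriate twist $k$. The whole point of parts (a)--(c) is to pin down $l_0$ and to express everything intrinsically in terms of the combinatorics of $W^p$. So the real content is parts (a) and (b); part (c) is then a bookkeeping identity relating $l_0$, the Tate twist, and the numbers $c_p = (m-p)(n-p)$ and $d_p = p(m+n-p)$ via $d_X = mn$.

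For part (a), I would argue that the Hodge filtration $F_\bullet(D_p)$ consists of $\GL$-equivariant (hence coherent, $F_0\D$-stable) $\mc{O}_{\XA}$-submodules, so each $F_l(D_p)$ is described by a subset $\mf{W}(F_l(D_p)) \subseteq W^p$, increasing in $l$, exhausting $W^p$, and empty for $l \ll 0$. The key structural input is that $\gr^F_\bullet(D_p)$, as a module over $\gr^F_\bullet \D_{\XA} = \Sym T_{\XA}$, is \emph{Cohen--Macaulay} of dimension $d_X$, supported on the conormal variety $C_p \subseteq T^*\XA$, which is irreducible of dimension $d_X$ (this is the standard fact that the characteristic variety of a simple holonomic module is irreducible, here equal to $\ol{T^*_{O_p}\XA} = C_p$, together with Saito's result that $\gr^F$ of a Hodge module is CM). Since $C_p$ is irreducible, $\gr^F_\bullet(D_p)$ has a well-defined lowest nonzero graded piece $\gr^F_{l_0}(D_p)$, and being a torsion-free module over the domain $\mc{O}_{C_p}$ it is supported on all of $C_p$; projecting to $\XA$ via $\pi(C_p) = \ZA_p$ forces $F_{l_0}(D_p)$ to be a nonzero $\GL$-submodule of $D_p$ whose support is all of $\ZA_p$. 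One then checks, using the explicit description \eqref{eq:char-Dp}--\eqref{eq:def-Wp} and the minimality of $\delta^p$ among elements of $W^p$ (note $\delta^p$ is the unique minimum of $W^p$ in the partial order, since $\ll_{p+1}+\cdots+\ll_n \le -c_p$ with equality exactly on $W^p_0$, whose unique minimal element is $\delta^p = \ll^{p,\emptyset}$), that the smallest possible nonzero equivariant submodule "generated in the conormal direction from the bottom" must contain the component $\bb{S}_{\delta^p(p)}V_1 \oo \bb{S}_{\delta^p}V_2$; uniqueness of $l_0$ is immediate from the strict increase of $F_\bullet$.

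For part (b), the statement $F_l(D_p) = 0$ for $l < l_0$ is part of the definition of $l_0$. The substantive claim $\mf{W}(\gr_l(D_p)) = W^p_{l-l_0}$ I would prove by relating the $F$-grading to the "distance from the bottom stratum $W^p_0$," i.e. to the quantity $d = -(\ll_{p+1}+\cdots+\ll_n) - c_p$ appearing in \eqref{eq:partition-Wp}. The mechanism: applying a vector field (degree-$1$ element of $\gr^F\D$, i.e. a linear form on the conormal fiber) raises the $F$-level by one and moves a weight $\ll^{p,\mu}$ to a weight covering it, and the Cohen--Macaulayness of $\gr^F$ over $\mc{O}_{C_p}$ forces this process to be "as free as possible," so that $\gr^F_\bullet(D_p)$ is generated over $\mc{O}_{C_p}$ by $\gr^F_{l_0}$ and $\gr^F_{l_0+d}$ realizes exactly the weights at combinatorial distance $d$; concretely one matches Hilbert series, computing the $\GL$-character of $\gr^F_\bullet(D_p)$ as a formal sum weighted by $q^{F\text{-level}}$ and comparing with the character of $\mc{O}_{C_p}$ as an $\mc{O}_{\ZA_p}$-module (the conormal fiber directions contribute the degree shifts). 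The identity $\gr^F_{l_0+d}(D_p) \leftrightarrow W^p_d$ then reduces to the identity $W^p = \bigsqcup_d W^p_d$ of \eqref{eq:partition-Wp} and the compatibility of the partial order with the CM structure. Finally, for part (c): once we know $\mc{M}$ is $IC_{\ZA_p}^H(k)$ for a unique $k$, Lemma~\ref{lem:hodgeF-LZX} says its Hodge filtration starts in level $d_X - d_{\ZA_p} + k = c_p + k$; comparing with $F_\bullet(D_p)$ starting in level $l_0$ gives $k = l_0 - c_p$, hence $\mc{M}$ is pure of weight $d_{\ZA_p} - 2k = d_p - 2(l_0 - c_p) = d_p + 2c_p - 2l_0 = mn + c_p - 2l_0$, using $d_p + c_p = mn$.

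**Main obstacle.** The hard part is part (b), specifically turning the abstract Cohen--Macaulayness of $\gr^F_\bullet(D_p)$ over $\mc{O}_{C_p}$ into the precise combinatorial identification of the $F$-levels with the sets $W^p_d$. Knowing $\gr^F$ is CM tells us it is a maximal Cohen--Macaulay $\mc{O}_{C_p}$-module, but a priori it could be a nontrivial MCM module rather than a (twisted) free-ish one, so one needs to rule out "extra generators" appearing at higher $F$-levels beyond $l_0$. I expect this to require either an explicit description of $\mc{O}_{C_p}$ as an $\mc{O}_{\ZA_p}$-algebra (the conormal variety of a determinantal variety has been studied, e.g.\ via the geometry in \eqref{eq:explicit-Cp} and the references to Strickland), together with a rank/Hilbert-series count forcing $\gr^F$ to be exactly the rank-one MCM module $\mc{O}_{C_p}$ up to twist, or a more hands-on argument propagating the $F$-filtration using the $\D$-module generators $\det^{p-n}$ from \eqref{eq:filtration-Sdet} in the square case and descent/base change for the rectangular case. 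The uniqueness-of-$l_0$ and purity assertions, by contrast, are essentially formal.
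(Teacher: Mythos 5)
Your overall architecture is right, and parts (a) and (c) are essentially correct: uniqueness of $l_0$ is immediate from multiplicity-freeness of $D_p$ as a $\GL$-representation together with exhaustiveness of $F_\bullet$, and your derivation of the weight in (c) from Lemma~\ref{lem:hodgeF-LZX} is exactly the paper's. (One slip in (a): $\delta^p$ is \emph{not} the unique minimum of $W^p$ in the partial order --- the elements $\ll^{p,\mu}=\delta^p+\mu^{\vee}$ for $\mu\neq 0$ are strictly smaller --- it is only the unique minimal element of the slice $W^p_0$. Also note that the claim that the filtration \emph{starts} at $l_0$ is not part of (a); in the paper it falls out of (b) by a counting argument.)

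The genuine gap is exactly where you flag it: part (b). You correctly assemble the inputs (irreducibility of the characteristic variety $C_p$, Saito's Cohen--Macaulayness of $\gr^F$, $\GL$-equivariance), but then you worry about classifying $\gr^F(D_p)$ as an MCM $\mc{O}_{C_p}$-module and ruling out ``extra generators,'' and you leave this unresolved, offering only a speculative Hilbert-series comparison with $\mc{O}_{C_p}$. The paper never needs any of that. Cohen--Macaulayness is used only through unmixedness: every nonzero $m\in M=\gr^F_\bullet(D_p)$ has set-theoretic support equal to all of $C_p$, hence $\Ann_A(m)\cap S\subseteq J_{p+1}$ and $\Ann_A(m)\cap S'\subseteq J'_{n-p+1}$ by (\ref{eq:proj-ICp}). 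Consequently, fixing $m_0$ in the ($1$-dimensional on the $V_1$-side) component indexed by $\delta^p$ in $M_{l_0}$, the products $f'_\mu\cdot m_0$ with $f'_\mu\in\bb{S}_{\mu}V_1^{\vee}\oo\bb{S}_{\mu}V_2^{\vee}\subset S'$ (degree $d=|\mu|$, not in $J'_{n-p+1}$) are nonzero and, because $\bb{S}_{\delta^p(p)}V_1$ is one-dimensional, land precisely in the $\ll^{p,\mu}$-isotypic component of $M_{l_0+d}$; this places every minimal element of $W^p_d$ in level $l_0+d$. Multiplying further by degree-zero elements $f_\gamma\in\bb{S}_{\gamma}V_1\oo\bb{S}_{\gamma}V_2\subset S$ and using the Littlewood--Richardson rule together with induction on $d$ fills in all of $W^p_d$, and since the $W^p_d$ partition $W^p=\mf{W}(M)$ while $M$ is multiplicity-free, the inclusions $W^p_{l-l_0}\subseteq\mf{W}(M_l)$ must be equalities and $M_l=0$ for $l<l_0$. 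So the missing idea is to convert CM-ness into non-vanishing of specific products $f\cdot m_0$ via annihilators, rather than into a structural classification of the associated graded as an $\mc{O}_{C_p}$-module.
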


\begin{proof}
 By $\GL$-equivariance, the filtered pieces $F_l(D_p)$ are $\GL$-representations. Since $F_{\bullet}$ is a good filtration, it is in particular exhaustive, and therefore $M=\gr_{\bullet}(D_p)$ is isomorphic to $D_p$ as a $\GL$-representation. Since it is a multiplicity free representation and $\delta^p\in W^p$, it follows that there exists a unique index $l_0$ such that $\delta^p\in\mf{W}(\gr_{l_0}(D_p))$, proving (a).
 
For (b), we let $A=S\oo S' \simeq S[y_{i,j}]$ denote the coordinate ring of the cotangent bundle $T^*\XA$, as in Section~\ref{subsec:matrices}. It is a graded $S$-algebra with $S$ placed in degree $0$, and $\deg(y_{i,j})=1$. The associated graded $M=\gr_{\bullet}(D_p)$ is a graded $A$-module, with $M_l=\gr_l(D_p)$ for all $l\in\bb{Z}$. By \cite[Remark~1.5]{raicu-dmods}, the support of $M$ (which is the characteristic variety of $D_p$) is irreducible (equal to the conormal variety $C_p$). By \cite[Lemme~5.1.13]{saito-PRIMS}, $M$ is a Cohen--Macaulay module, which implies that the set-theoretic support of any nonzero $m\in M$ is precisely equal to $C_p$. Using (\ref{eq:proj-ICp}), it follows that
 \begin{equation}\label{eq:ann-A-m}
 \op{Ann}_A(m) \cap S \subseteq J_{p+1}\quad\mbox{ and }\quad \op{Ann}_A(m) \cap S' \subseteq J'_{n-p+1}.
 \end{equation}

 Fix a non-zero element $m_0\in\bb{S}_{\delta^p(p)}V_1 \oo \bb{S}_{\delta^p}V_2 \subseteq M_{l_0}$, consider a partition $\mu\in\P_{n-p}$ with $|\mu|=d$ for some $d\geq 0$, and choose any non-zero element $f'_{\mu}\in\bb{S}_{\mu}V_1^{\vee}\oo\bb{S}_{\mu}V_2^{\vee}\subseteq S'$ (where the inclusion comes from the decomposition of $S'$ analogous to (\ref{eq:decomp-S})). The analogue of (\ref{eq:Slam-in-Jp}) for $S'$ implies that $f'_{\mu}\not\in J'_{n-p+1}$, and using (\ref{eq:ann-A-m}) we get that the element $m_{\mu}:=f'_{\mu}\cdot m_0$ is non-zero. Moreover, since $f'_{\mu}\in A_d$, we have that $m_{\mu}\in M_{l_0+d}$. Since $\bb{S}_{\delta^p(p)}V_1$ is one-dimensional, we have using (\ref{eq:lam-dual}) that
 \[ \bb{S}_{\mu}V_1^{\vee} \oo \bb{S}_{\delta^p(p)}V_1 = \bb{S}_{\delta^p(p) + \mu^{\vee}}V_1 = \bb{S}_{\ll^{p,\mu}(p)}V_1,\]
and therefore $m_{\mu} \in \bb{S}_{\ll^{p,\mu}(p)}V_1 \oo \bb{S}_{\ll^{p,\mu}}V_2$, showing that $\ll^{p,\mu}\in\mf{W}(M_{l_0+d})$. Writing $l=l_0+d$, we conclude that all the minimal elements of $W^p_{l-l_0}$ belong to $\mf{W}(M_l)=\mf{W}(\gr_l(D_p))$. Since $M$ and $D_p$ are isomorphic as $\GL$-representations, and since the sets $W^p_d$ partition $W^p=\mf{W}(M)$, it suffices to verify the inclusions $W^p_{l-l_0} \subseteq\mf{W}(M_l)$ for all $l\geq l_0$ in order to conclude (b). To that end, we prove by induction on $d\geq 0$ that $W^p_d \subseteq \mf{W}(M_{l_0+d})$. 

Consider first the case $d=0$ and let $\ll\in W^p_0$, so that $\ll_{p+1}=\cdots=\ll_n=p-m$. We can write $\ll = \delta^p + \gamma$, where $\gamma\in\mc{P}_p$. We choose any non-zero element $f_{\gamma}\in\bb{S}_{\gamma}V_1\oo\bb{S}_{\gamma}V_2 \subset S$, and note that $f_{\gamma}\not\in J_{p+1}$ by (\ref{eq:Slam-in-Jp}). Using (\ref{eq:ann-A-m}), we get that the element $m^{\gamma}_0:=f_{\gamma}\cdot m_0$ is non-zero, and belongs to $M_{l_0}$ since $\deg(f_{\gamma})=0$. As before we have
\[ \bb{S}_{\gamma}V_1 \oo \bb{S}_{\delta^p(p)}V_1 = \bb{S}_{\delta^p(p) + \gamma}V_1 = \bb{S}_{\ll(p)}V_1,\]
hence $m^{\gamma}_0 \in \bb{S}_{\ll(p)}V_1 \oo \bb{S}_{\ll}V_2$, proving that $\ll\in\mf{W}(M_{l_0})$ and concluding the base case of the induction.
 
For the inductive step, suppose that $d>0$ and let $\ll\in W^p_d$. We can write $\ll = \ll^{p,\mu} + \gamma$ for some $\mu\in\mc{P}_{n-p}$, $|\mu|=d$, and $\gamma\in\mc{P}_p$. We choose $f_{\gamma}$ as in the previous paragraph, and consider the element $m_{\mu}^{\gamma} := f_{\gamma} \cdot m_{\mu}\neq 0$ in $M_{l_0+d}$. By the Littlewood--Richardson rule, we have that
\[ \bb{S}_{\gamma}V_2 \oo \bb{S}_{\ll^{p,\mu}}V_2 = \bb{S}_{\ll}V_2 \oplus L,\]
where the representation $L$ is a direct sum of copies of $\bb{S}_{\b}V_2$ with $\b\geq\ll^{p,\mu}$ and
\[ \b_{p+1}+\cdots+\b_n > -d - c_p.\]
It follows that for any such $\b$ we either have $\b\not\in W^p$, or $\b\in W^p_{d'}$ for some $d'<d$. By induction, we know that $W^p_{d'}\subseteq\mf{W}(M_{l_0+d'})$, forcing $m^{\gamma}_{\mu}$ to be entirely contained in the component $\bb{S}_{\ll(p)}V_1 \oo \bb{S}_{\ll}V_2$ of $M$. This shows that $\ll\in\mf{W}(M_{l_0+d})$, concluding the induction step.
  
 To prove (c), we note that by the discussion in Section~\ref{subsec:Hodge-IC} we have $\mc{M}=IC_{\ZA_p}^H(k_0)$ for some $k_0\in\bb{Z}$. Combining the conclusion of (b) with Lemma~\ref{lem:hodgeF-LZX} we get that the Hodge filtration starts in level $l_0 = c_p+k_0$. Moreover, $\mc{M}$ is pure of weight $d_p-2k_0 = d_p+2c_p-2l_0=mn+c_p-2l_0$, as desired.
\end{proof}

\section{The weight filtration and Hodge ideals for the determinant hypersurface}\label{sec:hodge-det}

In this section $\XA\simeq\bb{C}^{n\times n}$ and $\ZA=\ZA_{n-1}\subset \XA$ is the determinant hypersurface. We consider $\mc{O}_{\XA}(*\ZA)\simeq S_{\det}$ as a mixed Hodge module, with a Hodge filtration $F_{\bullet}$ and a weight filtration $W_{\bullet}$. We write $\gr^F_{\bullet}$ and $\gr^W_{\bullet}$ for the corresponding associated graded modules. The main result of this section is the following.

\begin{theorem}
 We have that $\gr^W_w\mc{O}_{\XA}(*\ZA)=0$ if $w<n^2$ or $w>n^2+n$, and
 \[ \gr^W_{n^2+n-p}\mc{O}_{\XA}(*\ZA) = IC_{\ZA_p}^H\left(-{n-p+1\choose 2}\right)\mbox{ for }p=0,\cdots,n.\]
\end{theorem}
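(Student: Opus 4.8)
The plan is to reduce the statement to the determination of the single weight $w_p$ at which each composition factor $D_p$ of $\mc{O}_{\XA}(*\ZA)\simeq S_{\det}$ occurs in the weight filtration, and then to pin down these weights by combining soft structural facts with Theorem~\ref{thm:Hodge-Dp} and one genuinely quantitative input on the Hodge filtration.

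First I would fix the \emph{shape} of $W_{\bullet}$. By (\ref{eq:filtration-Sdet}) the lattice of $\D$-submodules of $S_{\det}$ is a chain, so every subquotient is uniserial; in particular each $\gr^W_w(S_{\det})$ is uniserial. On the other hand $\gr^W_w(S_{\det})$ is a pure Hodge module, so its underlying $\D$-module is semisimple, and a uniserial semisimple module has length $\leq 1$. Hence each $\gr^W_w$ is $0$ or has underlying module one of the $D_p$; since the total length is $n+1$, since $W_{\bullet}$ is increasing, and since $D_n=S$ is the socle and $D_0$ the head of $S_{\det}$, we get a strictly increasing sequence $w_n<w_{n-1}<\cdots<w_0$ with $\gr^W_{w_p}(S_{\det})=IC^H_{\ZA_p}(k_p)$ for a unique Tate twist $k_p$ (Section~\ref{subsec:Hodge-IC}), and $W_{w_p}(S_{\det})=\langle\det^{p-n}\rangle_{\D}$. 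By purity $w_p=d_p-2k_p$, and a short computation shows $w_p=n^2+n-p$ is equivalent to $k_p=-\binom{n-p+1}{2}$; so it suffices to prove $w_p=n^2+n-p$ for all $p$, and then the claimed vanishing range follows since the $w_p$ exhaust the $n+1$ composition factors.

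Next I would establish the inequality $w_p\geq n^2+n-p$. Since $W_{w_n}(S_{\det})=\mc{O}_{\XA}$ is a pure sub-Hodge-module of $\mc{O}_{\XA}(*\ZA)$ with underlying $\D$-module $\mc{O}_{\XA}$, and the only such (up to the strict compatibility of morphisms with $W_{\bullet}$, which forbids a Tate twist) is $\mc{O}_{\XA}^H$, we get $w_n=n^2$. For $p<n$, recall from Section~\ref{subsec:equiv-Dmods} that $\langle\det^{p-n}\rangle_{\D}/\langle\det^{p-n+2}\rangle_{\D}$ is the \emph{nonsplit} extension of $D_p$ by $D_{p+1}$; as a subquotient of $S_{\det}$ it carries a mixed Hodge structure which is again nonsplit (a splitting in $\mathrm{MHM}$ would split the underlying $\D$-module), and a nonsplit extension of pure Hodge modules is possible only when the weight of the sub is at most the weight of the quotient, i.e. $w_{p+1}\leq w_p$. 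Combined with the strictness from the previous paragraph this gives $w_{p+1}<w_p$, hence $w_p\geq w_n+(n-p)=n^2+n-p$.

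Finally, the upper bound $w_p\leq n^2+n-p$, which is the main obstacle. Here I would invoke Theorem~\ref{thm:Hodge-Dp}(c): the induced Hodge filtration on $\gr^W_{w_p}(S_{\det})$ (with underlying module $D_p$) starts in some level $l_0^{(p)}$, and $w_p=n^2+c_p-2l_0^{(p)}$, so with $c_p=(n-p)^2$ the desired bound is equivalent to $l_0^{(p)}\geq\binom{n-p}{2}$. By the strict compatibility of $F_{\bullet}$ with $W_{\bullet}$, the identification $W_{w_p}(S_{\det})=\langle\det^{p-n}\rangle_{\D}$, and the fact (seen as for $m=n$ in Theorem~\ref{thm:Hodge-Dp}) that the $\delta^p$-isotypic component of $D_p$ is spanned by the image of $\det^{p-n}$, one checks that $l_0^{(p)}$ is the least $l$ with $\det^{p-n}\in F_l(S_{\det})+\langle\det^{p-n+1}\rangle_{\D}$. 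Thus the upper bound reduces to the containment
\[ F_{\binom{n-p}{2}-1}\bigl(S_{\det}\bigr)\subseteq\langle\det^{p-n+1}\rangle_{\D}\qquad\text{for } p=0,\dots,n. \]
For $n-p\leq 1$ this is trivial ($F_{-1}(S_{\det})=0$) or amounts to $F_0(S_{\det})\subseteq\langle\det^{-1}\rangle_{\D}$, i.e. to $I_0(\ZA)=\mc{O}_{\XA}$ (log canonicity of $(\XA,\ZA)$, the $k=0$ instance of Theorem~\ref{thm:hodge-ideals-detl}, which can be checked directly). For $n-p\geq 2$ one needs a genuine bound on the pole order of the Hodge filtration; since the Bernstein--Sato polynomial of $\det$ is $b_{\det}(s)=(s+1)(s+2)\cdots(s+n)$, whose roots control the $V$-filtration of $S_{\det}$ along $\ZA$, the natural tool is a Mustata--Popa type comparison of $F_{\bullet}$ with this $V$-filtration together with the identification of its steps via (\ref{eq:filtration-Sdet}), run as an induction on $n-p$ using the weights $w_{p'}$ already found for $p'>p$. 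Alternatively one can bypass this step by computing $F_{\bullet}(S_{\det})$ directly from a $\GL$-equivariant log resolution of $(\XA,\ZA)$ (an iterated blow-up along $\ZA_0\subset\ZA_1\subset\cdots$) and the Decomposition Theorem, reading off both filtrations at once; the bookkeeping there is heavier but self-contained. I expect this last containment to be where essentially all the work lies.
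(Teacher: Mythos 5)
Your reduction is sound up to the final step, and it largely mirrors the paper: the shape of $W_{\bullet}$ (each $\gr^W_w$ is simple because the submodule lattice of $S_{\det}$ is the chain (\ref{eq:filtration-Sdet}) while polarizable pure Hodge modules have semisimple underlying $\D$-modules), the identification $w_p=d_p-2k_p$, the value $w_n=n^2$ (the paper gets this by restricting to the dense orbit, which is cleaner than your appeal to strictness ``forbidding a Tate twist''), and the strict decrease $w_{p+1}<w_p$ giving the lower bound $w_p\geq n^2+n-p$. Your translation of the remaining upper bound $w_p\leq n^2+n-p$ into the statement $l_0^{(p)}\geq\binom{n-p}{2}$ about the starting level of the Hodge filtration on $D_p$ is also correct.

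But that upper bound carries essentially all the content of the theorem, and your proposal does not prove it: you offer a menu of possible tools (a Musta\c{t}\u{a}--Popa-type comparison of $F_{\bullet}$ with the $V$-filtration via $b_{\det}(s)=(s+1)\cdots(s+n)$, or an equivariant log resolution plus the Decomposition Theorem) without carrying any of them out, and you acknowledge that ``essentially all the work lies'' there. Note also that the general pole-order bound (\ref{eq:Hodge-in-pole}) cannot do the job: $F_k(S_{\det})\subseteq \det^{-k-1}S$ does not force $F_{\binom{n-p}{2}-1}(S_{\det})\subseteq\langle\det^{p-n+1}\rangle_{\D}$, since the weight $\delta^p=((p-n)^n)$ satisfies $\ll_n\geq -\binom{n-p}{2}$ as soon as $n-p\geq 3$. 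The paper closes this gap with a short combinatorial argument that is missing from your proposal: since $F_{l_p}(S_{\det})$ is an $S$-submodule of $S_{\det}$, property (\ref{eq:W-module-interval}) upgrades $\delta^p\in\mf{W}(F_{l_p}(S_{\det}))$ to $\delta^p+(1^{p+1})\in\mf{W}(F_{l_p}(S_{\det}))$; this weight equals $\ll^{p+1,(1^{n-p-1})}\in W^{p+1}_{n-p-1}$, which by Theorem~\ref{thm:Hodge-Dp}(b) first appears in the Hodge filtration of $D_{p+1}$ in level $l_{p+1}+n-p-1$, whence $l_p\geq l_{p+1}+n-p-1$, i.e.\ $w_p\leq w_{p+1}+1$. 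Combined with the strict decrease this determines all the weights by induction from $w_n=n^2$, with no input from Bernstein--Sato polynomials or resolutions. You should look for an argument of this kind --- playing the $S$-module structure of the filtered pieces against the graded description in Theorem~\ref{thm:Hodge-Dp}(b) --- rather than the heavier, unexecuted machinery you propose.
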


Combined with Theorem~\ref{thm:Hodge-Dp}, this result determines the Hodge filtration on $\mc{O}_{\XA}(*\ZA)$, and with that the Hodge ideals $I_k(\ZA)$. We explain the details in Section~\ref{subsec:Hodge-detl}. 

\subsection{The weight filtration on $\mc{O}_{\XA}(*\ZA)$}\label{subsec:weight-detl}

The goal of this section is to explain the proof of Theorem~\ref{thm:weight-filtration}. We write $S_{\det}$ or $D_p$ when we refer to $\D$-modules, and $\mc{O}_{\XA}(*\ZA)$ or $IC_{\ZA_p}^H(k)$ when we want to keep track of the (mixed) Hodge module structure. 

Since distinct $\D$-module composition factors of $S_{\det}$ have distinct support, it follows from the decomposition by strict support of pure Hodge modules \cite[Section~8.3.3(p4)]{hottaetal} that $\gr_w^W(S_{\det})$ is a direct sum of simple $\D$-modules for each $w\in\bb{Z}$. Since the filtration (\ref{eq:filtration-Sdet}) completely characterizes the $\D$-submodule structure of $S_{\det}$, it follows that the only subquotients of $S_{\det}$ that are direct sums of simple modules are the successive quotients in the filtration (\ref{eq:filtration-Sdet}), and hence they are simple. It follows that we can find $w_0>w_1>\cdots>w_n$ such that
\[\gr_{w_p}^W(S_{\det}) = D_p\mbox{ for }p=0,\cdots,n,\]
and $\gr_w^W(S_{\det})=0$ if $w\not\in\{w_0,\cdots,w_n\}$. At the level of Hodge modules, we have
\[\gr_{w_p}^W\mc{O}_{\XA}(*\ZA) = IC_{\ZA_p}^H(k_p)\mbox{ for }p=0,\cdots,n,\]
where $w_p = d_p - 2k_p$ by the discussion in Section~\ref{subsec:Hodge-IC}. Since the restriction of $\mc{O}_{\XA}(*\ZA)$ to the dense orbit is $\mc{O}_{O_n}^H$ of weight $n^2$, we obtain $w_n=n^2$ and $k_n=0$. To prove Theorem~\ref{thm:weight-filtration}, it suffices to check that $w_p=n^2+n-p$ for $p=0,\cdots,n-1$, since then it follows that
\begin{equation}\label{eq:formula-kp}
 k_p = \frac{d_p-w_p}{2} = \frac{p\cdot(2n-p) - n^2 - n + p}{2} = -{n-p+1 \choose 2},
\end{equation}
as desired. Moreover, since the weights $w_i$ are strictly decreasing, it is enough to check that $w_p-w_{p+1}\leq 1$ for $p=0,\cdots,n-1$, which we do next.

Since maps in the category of mixed Hodge modules are strict with respect to the Hodge filtration, it follows that for each $r\in\bb{Z}$, the weight filtration on $S_{\det}$ determines a filtration by $S$-submodules on $F_r(S_{\det})$, with composition factors $F_r(D_p)$ for $p=0,\cdots,n$. As $\GL$-representations, we get a direct sum decomposition
\begin{equation}\label{eq:FrSdet-sum-FrDp}
 F_r(S_{\det}) = F_r(D_0) \oplus F_r(D_1) \oplus \cdots \oplus F_r(D_n).
\end{equation}
We let $l_p$ be the starting level for the Hodge filtration on $D_p$, and note that by Theorem~\ref{thm:Hodge-Dp}(c) we have 
\begin{equation}\label{eq:wp-from-lp}
w_p = n^2 + (n-p)^2 - 2l_p
\end{equation}
It follows from (\ref{eq:FrSdet-sum-FrDp}) and Theorem~\ref{thm:Hodge-Dp} that $\delta^p = ((p-n)^n)\in\mf{W}(F_{l_p}(D_p)) \subseteq \mf{W}(F_{l_p}(S_{\det}))$. Suppose that $0\leq p\leq n-1$. Using (\ref{eq:W-module-interval}), we obtain $\delta^p + (1^{p+1}) \in \mf{W}(F_{l_p}(S_{\det}))$. Notice that
\begin{equation}\label{eq:deltap+1p+1}
 \delta^p + (1^{p+1}) \overset{(\ref{eq:def-ll-p-mu})}{=} \ll^{p+1,(1^{n-p-1})} \overset{(\ref{eq:partition-Wp})}{\in} W^{p+1}_{n-p-1} = \mf{W}(\gr_{l_{p+1}+n-p-1}(D_{p+1})),
\end{equation}
where the last equality follows from Theorem~\ref{thm:Hodge-Dp}(b). Since $\delta^p + (1^{p+1}) \in \mf{W}(F_{l_p}(S_{\det})) \cap W^{p+1}$, it follows from (\ref{eq:FrSdet-sum-FrDp}) that $\delta^p + (1^{p+1}) \in \mf{W}(F_{l_p}(D_{p+1}))$. Combining this with (\ref{eq:deltap+1p+1}), we obtain the inequality
\[ l_{p+1} + n-p-1 \leq l_p,\]
which is equivalent via (\ref{eq:wp-from-lp}) to $w_p\leq w_{p+1}+1$, as desired.

\begin{remark} It was pointed to us by a referee that Theorem~\ref{thm:weight-filtration} can be verified using a microlocal approach based on \cite{gyoja}. Indeed, if we let $\Lambda_p$ denote the conormal variety to the orbit of rank $p$ matrices, then \cite[Example 9.27]{kashi} computes the corresponding microlocal $b$-function for the determinant hypersurface as $b_{\Lambda_p}(s) = (s+1)(s+2)\cdots(s+n-p)$. By \cite{raicu-dmods}, $\Lambda_p$ is the characteristic variety of $D_p$, and each $D_p$ appears with multiplicity one as a $\D$-module composition factor of $\mc{O}_{\XA}(*\ZA)$. It follows from \cite[Equation~4.7(6)]{gyoja} that $D_p$ appears as a composition factor of $\gr^W_{n^2+j}\mc{O}_{\XA}(*\ZA)$ if and only if $b_{\Lambda_p}(s)$ has exactly $j$ integer roots, that is, if and only if $j=n-p$. This shows that $\gr^W_{n^2+n-p}\mc{O}_{\XA}(*\ZA)$ agrees with $IC_{\ZA_p}^H$ up to a Tate twist, which is then computed as in (\ref{eq:formula-kp}) using Theorem~\ref{thm:Hodge-Dp}.
\end{remark}

\subsection{The Hodge filtration on $\mc{O}_{\XA}(*\ZA)$}\label{subsec:Hodge-detl}

In this section we explain the proof of Theorem~\ref{thm:hodge-filtration-detl}. In light of (\ref{eq:FrSdet-sum-FrDp}), in order to prove (\ref{eq:hodge-filtration-detl}) it is enough to check that $\mf{W}(F_k(D_p)) = U_k^p$, which in turn reduces to showing that $\mf{W}(\gr^F_k(D_p)) = U_k^p\setminus U_{k-1}^p$. Notice that
\begin{equation}\label{eq:Ukp-1}
U_k^p\setminus U_{k-1}^p = \left\{\ll\in W^p : \ll_{p+1}+\cdots+\ll_n = -{n-p+1\choose 2} - k\right\} = \begin{cases}
\emptyset & \mbox{if }k<{n-p\choose 2} \\
W^p_d & \mbox{if }d=k-{n-p\choose 2} \geq 0.
\end{cases}
\end{equation}
It follows from (\ref{eq:wp-from-lp}) and the fact that $w_p = n^2+n-p$ that the Hodge filtration on $D_p$ starts in level $l_p = {n-p\choose 2}$. Moreover, using Theorem~\ref{thm:Hodge-Dp}(b) we have that $\mf{W}(\gr^F_k(D_p)) = W^p_{k-l_p}$ for $k\geq l_p$, which by (\ref{eq:Ukp-1}) is equal to $U_k^p\setminus U_{k-1}^p$, as desired.

We now discuss the generation level of the Hodge filtration on $\mc{O}_{\XA}(*\ZA)$. Given a filtered $\D$-module $(M,F_{\bullet})$, we say that the filtration $F_{\bullet}$ is \defi{generated in level $q$} if
$$
F_{\ell}(\D) \cdot F_q(M)=F_{q+\ell}(M) \textnormal{\;\;for all\; $\ell\geq 0$},
$$
where $F_{\bullet}(\D)$ denotes the order filtration on $\D$. The \defi{generation level} of $(M,F_{\bullet})$ is defined to be the minimal $q$ such that $F_{\bullet}$ is generated in level $q$. 

Since $D_0$ is a quotient of $S_{\det}$, and the Hodge filtration on $D_0$ starts in level ${n\choose 2}$, it follows that the generation level for the Hodge filtration on $S_{\det}$ is at least ${n\choose 2}$. To prove the equality, it suffices to check that $\gr_{\bullet}^F(S_{\det})$ is a graded $A$-module generated in degree $\leq{n\choose 2}$. The weight filtration on $S_{\det}$ induces a filtration on $\gr_{\bullet}^F(S_{\det})$ by graded $A$-submodules, with composition factors $\gr_{\bullet}^F(D_p)$, so it suffices to check that the latter are generated in degree $\leq{n\choose 2}$. The proof of Theorem~\ref{thm:Hodge-Dp}(b) shows that $\gr_{\bullet}^F(D_p)$ is generated as an $A$-module by the ($1$-dimensional) isotypic component $\bb{S}_{\delta^p}V_1 \oo \bb{S}_{\delta^p}V_2$ which appears in degree $l_p = {n-p\choose 2} \leq {n\choose 2}$. 

\begin{remark}
 It is not hard to deduce from the preceding arguments that $\gr_{\bullet}^F(D_p)$ is isomorphic to the coordinate ring of the conormal variety $C_p$ (with an appropriate degree shift, and a twist by a $1$-dimensional $\GL$-representation).
\end{remark}

\subsection{Symbolic powers and the Hodge ideals of the determinant hypersurface}\label{subsec:equiv-ideals-filtration}

In this section we prove Theorem~\ref{thm:hodge-ideals-detl}, by showing that (\ref{eq:hodge-filtration-detl}) implies (\ref{eq:Ik=intersec-symbolic}). For $d\geq 0$ we have by \cite[Section~7]{DCEP} that
\[\mf{W}(J_p^{(d)}) = \{ \mu \in \mc{P}_n : \mu_p + \cdots + \mu_n \geq d\}.\]
By adding the redundant term $p=n$ in (\ref{eq:Ik=intersec-symbolic}), we can reformulate the conclusion of Theorem~\ref{thm:hodge-ideals-detl} as
\[ \mu\in \mf{W}(I_k(\ZA)) \Longleftrightarrow \mu_p+\cdots+\mu_n \geq (n-p)\cdot(k-1) - {n-p\choose 2}\mbox{ for }1\leq p\leq n.\]
Since $\mc{O}_{\XA}((k+1)\ZA)$ is the free $S$-module generated by $\det^{-k-1}$, it follows that after letting $\ll = \mu - ((k+1)^n)$ and using (\ref{eq:Fk-to-Ik}), we can rewrite the above equivalence, after manipulations and setting $s=p-1$, as
\begin{equation}\label{eq:equiv-ll-in-Fk}
 \ll\in \mf{W}(F_k(S_{\det})) \Longleftrightarrow \ll_{s+1}+\cdots+\ll_n \geq -{n-s+1\choose 2}-k, \mbox{ for all }0\leq s\leq n-1.
\end{equation}
Since the sets $W^p$ in (\ref{eq:def-Wp}) partition $\bb{Z}^n_{\dom}$ (when $m=n$), it suffices to prove (\ref{eq:equiv-ll-in-Fk}) under the hypothesis that $\ll\in W^p$ for some fixed $p$. We note that the left hand side is then equivalent via (\ref{eq:hodge-filtration-detl}) to the condition $\ll\in U^p_k$. If $p=n$ then $\ll\in\mc{P}_n$ is a partition, and both sides of (\ref{eq:equiv-ll-in-Fk}) are true. We may therefore assume that $0\leq p\leq n-1$. The implication ``$\Longleftarrow$" follows from the definition of the set $U^p_k$ by taking $s=p$.

To prove ``$\Longrightarrow$" we consider any $\ll\in U^p_k$ and note that the inequality in (\ref{eq:equiv-ll-in-Fk}) is satisfied for $s=p$. If $s<p$ then since $\ll_{s+1},\cdots,\ll_{p-1}\geq\ll_p\geq p-n$ we obtain
\[ \ll_{s+1}+\cdots+\ll_n \geq (p-s)\cdot \ll_p + \ll_{p+1}+\cdots+\ll_n \geq (p-s)\cdot(p-n)-{n-p+1\choose 2} - k\geq -{n-s+1\choose 2}-k,\]
so the inequality in (\ref{eq:equiv-ll-in-Fk}) holds for $s<p$. If $s>p$ then we obtain using $p-n\geq\ll_{p+1}\geq\cdots\geq\ll_s$ that
\[ \ll_{s+1}+\cdots+\ll_n = \ll_{p+1}+\cdots+\ll_n - (\ll_{p+1}+\cdots+\ll_s) \geq -{n-p+1\choose 2} - k - (s-p)\cdot(p-n)\geq -{n-s+1\choose 2}-k,\]
so the inequality in (\ref{eq:equiv-ll-in-Fk}) also holds for $s>p$, concluding the proof.

\section{Hodge module structure for local cohomology with support in maximal minors}\label{sec:hodge-loccoh}

We let $\XA = \bb{A}(V_1\oo V_2)$, where $\dim(V_1)=m>n=\dim(V_2)$, and use the notation in Section~\ref{sec:prelim}. As in the Introduction, we write $f_+$ for both the $\D$-module and Hodge module direct image along some map $f$. The goal of this section is to prove the following.

\begin{theorem}\label{thm:Rf+Hodge}
 Let $f:U \lra \XA$ denote the inclusion of the dense orbit $U=O_n$ into $\XA$. We have
 \[ \mc{H}^j(f_+\mc{O}_U^H) = \begin{cases}
 IC_{\ZA_p}^H(k'_p) & \mbox{if }j=(n-p)\cdot(m-n),\ k'_p=-{n-p+1\choose 2}-(n-p)\cdot(m-n), \\
 0 & \mbox{otherwise},
 \end{cases}
 \]
and $\mc{H}^{(n-p)\cdot(m-n)}(f_+\mc{O}_U^H)$ is pure of weight $mn+(n-p)\cdot(m-n+1)$ for $p=0,\cdots,n$.
\end{theorem}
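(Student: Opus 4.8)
The plan is to factor $f$ as in (\ref{eq:factor-f}), $U \overset{\iota}{\lra} Y \overset{\pi}{\lra} \XA$, where $Y = \bb{A}_{\bb{G}}(\mc{Q}\oo\bb{C}^n)$ for $\bb{G}=\bb{G}(n;m)$, and to compute $f_+\mc{O}_U^H = \pi_+(\iota_+\mc{O}_U^H)$ in two stages. First I would analyze $\iota_+\mc{O}_U^H = \mc{O}_Y(*\ZA^Y)$: since $Y$ is locally a space of $n\times n$ matrices over an $n(m-n)$-dimensional base, and $\ZA^Y$ is the (relative) determinant hypersurface, the results of Section~\ref{sec:hodge-det} apply locally. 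More precisely, I expect the weight filtration on $\mc{O}_Y(*\ZA^Y)$ to have composition factors $\gr^W_{?}\mc{O}_Y(*\ZA^Y) = IC^H_{\ZA^Y_p}(-\binom{n-p+1}{2})$, with weights shifted from the square-matrix case by $d_Y - n^2 = n(m-n)$ (the base dimension): this should follow because forming $\mc{O}_Y(*\ZA^Y)$, the weight filtration, and the intersection complexes all commute with the smooth projection $Y\to\bb{G}$ up to the Tate shift by $d_Y - d_{Y/\bb{G}}$, so the local computation globalizes. The upshot is an identification of $\gr^W_\bullet(\iota_+\mc{O}_U^H)$ with a direct sum of $IC^H_{\ZA^Y_p}(-\binom{n-p+1}{2})$, with pure weights $d_Y + (n-p) - 2\binom{n-p+1}{2} + (\text{correction})$ that I would pin down by tracking the $p=n$ piece $\mc{O}_Y^H$, which has weight $d_Y = mn$.

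Next I would push forward by $\pi$. Since $\pi$ is projective, the Decomposition Theorem guarantees that $\pi_+$ of a pure Hodge module is a direct sum of shifted pure Hodge modules; applied to each $D_p^Y = \mc{L}(\ZA^Y_p,Y)$ (with its Hodge structure of the appropriate weight), Theorem~\ref{thm:decomp-DpY} gives the $\D$-module decomposition of $\mc{H}^j(\pi_+ D_p^Y)$ explicitly in terms of the $D_i$'s and $q$-binomials. The weights on the summands are then forced: by the Decomposition Theorem each $D_i$-summand of $\mc{H}^j(\pi_+ D_p^Y)$ carries the pure Hodge structure $IC^H_{\ZA_i}(\star)$ whose weight is $(\text{weight of the pure Hodge module } IC^H_{\ZA^Y_p}(\ldots)) + j$ — using that on the derived category $\pi_+$ preserves weights in the sense that $\mc{H}^j$ of a pure object of weight $w$ is pure of weight $w+j$ (strictness of the weight filtration for the projective pushforward of a pure Hodge module). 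Combining this with the explicit powers of $q$ in Theorem~\ref{thm:decomp-DpY} recovers which $D_i$ appears in which cohomological degree, and with what Tate twist.

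Then I would assemble the two stages via a spectral sequence. Applying $\pi_+$ to the weight filtration on $\iota_+\mc{O}_U^H$ yields a spectral sequence (or, since everything in sight is pure after applying Theorem~\ref{thm:decomp-DpY}, a filtration) converging to $\mc{H}^\bullet(f_+\mc{O}_U^H)$, with $E_1$-page built from the $\mc{H}^j(\pi_+ D_p^Y)$. The key point is a \emph{purity-forces-degeneration} argument: since each $\mc{H}^j(\pi_+\gr^W_w(\iota_+\mc{O}_U^H))$ is pure, and since the cohomological degree $j$ in which a given $D_i$ can appear is rigidly tied to its weight (different weights land in different degrees by the weight bookkeeping above, together with the fact that $H^j_{\ZA}(\XA,\mc{O}_\XA)$ is known by (\ref{eq:Dp-as-loccoh}) to be nonzero in exactly one degree for each $p$), there is no room for nonzero differentials, and no room for extensions between pieces of different weight. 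This pins down $\mc{H}^j(f_+\mc{O}_U^H)$: matching the exponents of $q$ in Theorem~\ref{thm:decomp-DpY} against the weight shifts from stage one, the copy of $D_p$ shows up precisely in degree $j = (n-p)(m-n)$, with Tate twist $k'_p = -\binom{n-p+1}{2} - (n-p)(m-n)$, and purity of weight $mn + (n-p)(m-n+1)$, using $d_p - 2k'_p = p(m+n-p) + (n-p+1)(n-p) + 2(n-p)(m-n) = mn + (n-p)(m-n+1)$. Finally I would sanity-check against the known facts: $\mc{H}^0 = \mc{O}_\XA^H$ (the $p=n$ term, weight $mn$), and (\ref{eq:Rf+=loccoh}) matches the nonvanishing pattern (\ref{eq:Dp-as-loccoh}).

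The main obstacle I anticipate is \textbf{controlling the spectral sequence / ruling out extensions}: a priori $f_+\mc{O}_U^H$ is only a complex of mixed Hodge modules, and one must argue that its cohomologies are pure and that no $D_i$ appears in more than one cohomological degree. This rests on carefully combining (i) the weight estimates for $\pi_+$ of pure modules (each $\mc{H}^j$ of a weight-$w$ object is pure of weight $w+j$), (ii) the precise weight of $\gr^W_\bullet(\iota_+\mc{O}_U^H)$ from stage one, and (iii) the a priori knowledge that $f_+\mc{O}_U^H$ has cohomology only in the degrees dictated by (\ref{eq:Dp-as-loccoh}), so that the "slots" are disjoint and the spectral sequence has nowhere to go; getting all three to line up numerically — i.e. verifying that the weight attached to $D_p$ via stage one plus its cohomological degree via Theorem~\ref{thm:decomp-DpY} is consistent and degree-separated — is where the real work lies, and it is essentially a bookkeeping argument with the $q$-binomial identities, but one that must be done with care.
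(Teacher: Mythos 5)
Your proposal is correct and follows essentially the same route as the paper: factor $f=\pi\circ\iota$, transport the square-matrix weight filtration to $\mc{O}_Y(*\ZA^Y)$, push forward using Theorem~\ref{thm:decomp-DpY} together with the rule that $\mc{H}^j$ of a pure module of weight $w$ is pure of weight $w+j$, and run the weight-filtration spectral sequence. The ``main obstacle'' you flag is lighter than you fear: purity of $\mc{H}^j(f_+\mc{O}_U^H)$ is automatic because its underlying $\D$-module is the simple module $D_p$ by (\ref{eq:Dp-as-loccoh}), so all that remains is the observation (the paper's Lemma~\ref{lem:Di-in-Rpi-DYp}(c)) that $D_p$ occurs in $\mc{H}^{(n-p)(m-n)}(\pi_+D_s^Y)$ only for $s=p$, which identifies the weight of the subquotient exactly as in your bookkeeping.
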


Writing $\ZA = \ZA_{n-1}$ for the complement of $U$ in $\XA$ and using the standard identification (\ref{eq:Rf+=loccoh}) with local cohomology, together with (\ref{eq:Dp-as-loccoh}), it follows that $\mc{H}^j(f_+\mc{O}_U^H)$ is non-zero only for $j=(n-p)\cdot(m-n)$, in which case its underlying $\D$-module is $D_p$. It follows that $\mc{H}^{(n-p)\cdot(m-n)}(f_+\mc{O}_U^H)$ is equal to $IC_{\ZA_p}^H$ up to a Tate twist, and the content of Theorem~\ref{thm:Rf+Hodge} is the determination of the constants $k'_p$. This is equivalent to finding the weights $w'_p$ of the Hodge modules $IC_{\ZA_p}^H(k'_p)$, because of the identity $w'_p = d_p - 2k'_p$ (see Section~\ref{subsec:Hodge-IC} and (\ref{eq:dp-cp})). We note that Theorem~\ref{thm:hodge-filtration-loccoh} is an immediate consequence of Theorem~\ref{thm:Rf+Hodge}, in light of the identification (\ref{eq:Rf+=loccoh}).

As explained in the Introduction, our strategy to prove Theorem~\ref{thm:Rf+Hodge} proceeds as follows: we factor $f$ as in (\ref{eq:factor-f}), and understand $\iota_+\mc{O}_U^H$ using the information for square matrices developed in Section~\ref{sec:hodge-det}; we then study $\pi_+$ using the Decomposition Theorem, as explained in Sections~\ref{subsec:decomp-detl} and~\ref{subsec:decomp-geom-origin}. Based on these preliminaries, the completion of the proof of Theorem~\ref{thm:Rf+Hodge} is explained in Section~\ref{subsec:weights-loccoh}.

\subsection{Decomposition theorem for some standard resolutions of determinantal varieties}\label{subsec:decomp-detl}

For $p\leq n$ we consider the Grassmannian $\bb{G}_p = \bb{G}(p;V_1)$ of $p$-dimensional quotients of $V_1$ (see Section~\ref{subsec:bundles}) and let 
\begin{equation}\label{eq:resn-Yp-of-Zp}
Y_p = \bb{A}_{\bb{G}_p}(\mc{Q}_p\oo V_2) \overset{\pi_p}{\lra} \bb{A}(V_1\oo V_2)=\XA
\end{equation}
denote one of the standard resolutions of singularities of $\ZA_p$ (see for instance \cite[Proposition~6.1.1(a)]{weyman}). The map $\pi_p$ is the composition of the closed immersion $\bb{A}_{\bb{G}_p}(\mc{Q}_p\oo V_2) \hookrightarrow \bb{A}_{\bb{G}_p}(V_1\oo V_2)$ induced by the tautological quotient map $V_1\oo\mc{O}_{\bb{G}_p}\onto\mc{Q}_p$, with the projection $\bb{A}_{\bb{G}_p}(V_1\oo V_2)\simeq\bb{G}_p\times\XA\lra\XA$. Since $\pi_p$ is projective, it follows from the Decomposition Theorem of \cites{BBD,deCat-Mig} that the $\D$-module direct image of $\mc{O}_{Y_p}$ is a complex in the derived category whose cohomology $\mc{H}^j(\pi_{p+}\mc{O}_{Y_p})$ is semisimple for all~$j$. Since the map $\pi_p$ is $\GL$-equivariant, the summands will be simple $\GL$-equivariant $\D$-modules \cite[Section~5.3]{BL}. The relevant multiplicities are computed by the following (see the discussion after Theorem~\ref{thm:decomp-DpY} for the interpretation of the formula below).

\begin{theorem}\label{thm:decomp-detl}
 The $\D$-module direct image of $\mc{O}_{Y_p}$ is given by the formal identity
 \[\sum_{j\in\bb{Z}} \mc{H}^j(\pi_{p+}\mc{O}_{Y_p})\cdot q^j = \sum_{i=0}^p D_i \cdot q^{-(m-n-p+i)\cdot(p-i)}\cdot{m-n\choose p-i}_{q^2}.\]
\end{theorem}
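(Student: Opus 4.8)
The strategy is to compute the direct image $\pi_{p+}\mc{O}_{Y_p}$ by decomposing $Y_p$ geometrically and applying base change, rather than attacking the Decomposition Theorem head-on. The key observation is that $\pi_p$ factors through the dense orbit: over $O_p \subseteq \ZA_p$, the map $\pi_p$ is an isomorphism onto $O_p$, and away from $\ZA_p$ the support of the pushforward vanishes. So the underlying $\D$-modules appearing in $\mc{H}^j(\pi_{p+}\mc{O}_{Y_p})$ must all be among $D_0,\dots,D_p$, and one needs only determine the multiplicities $m_{i,j}$ in $\mc{H}^j(\pi_{p+}\mc{O}_{Y_p}) = \bigoplus_i D_i^{\oplus m_{i,j}}$. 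I would package these as a generating polynomial $P_i(q) = \sum_j m_{i,j} q^j$ and aim to show $P_i(q) = q^{-(m-n-p+i)(p-i)}\cdot{m-n\choose p-i}_{q^2}$.

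**Key steps.** First, reduce the $\D$-module computation to an equivariant cohomology computation: since everything is $\GL$-equivariant and all the modules $D_i$ and $\mc{O}_{Y_p}$ have known $\GL$-characters (via \eqref{eq:char-Dp} and Cauchy's formula applied fiberwise over $\bb{G}_p$), the formal identity in the theorem is equivalent to an identity of $\GL$-characters with coefficients in $\bb{Z}[q,q^{-1}]$, once one knows the cohomological degrees are forced. Second, identify those degrees: I would restrict to the stratum $O_i$ and use proper base change for $\pi_p$ over a point of $O_i$. The fiber of $\pi_p$ over a rank-$i$ matrix is the space of ways to factor it through a $p$-dimensional quotient of $V_1$ containing the (fixed $i$-dimensional) row space, which is a Grassmannian bundle — concretely a $\bb{G}(p-i; m-i)$ or a vector-bundle-over-Grassmannian whose cohomology contributes the $q$-binomial ${m-n\choose p-i}_{q^2}$, shifted by the relative dimension. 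The even-degree vanishing of this cohomology (it is a Grassmannian, so cohomology is concentrated in even degrees with Poincaré polynomial a Gaussian binomial) guarantees that no cancellation occurs between different $j$, so the Decomposition Theorem semisimplicity plus this fiber computation pins down the $m_{i,j}$ uniquely. Third, assemble: the shift $q^{-(m-n-p+i)(p-i)}$ is exactly (twice the negative of) the complex dimension of the generic fiber over $O_i$, i.e. $c_i - c_p$ computed via \eqref{eq:dp-cp}, reconciling with the normalization that $\mc{H}^0$ carries $\mc{O}_{Y_p}$'s generic part $D_p$.

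**Main obstacle.** The hard part will be making the fiber-dimension bookkeeping and the resulting cohomological shifts fully rigorous — in particular, matching the Gaussian binomial ${m-n\choose p-i}_{q^2}$ to the Poincaré polynomial of the correct Grassmannian fiber, and checking that the relative canonical twists / shifts inherent in the $\D$-module direct image functor ($\pi_{p+}(-) = R\pi_{p*}(\mc{D}_{\XA \leftarrow Y_p}\otimes^L_{\mc{D}_{Y_p}} -)$, with its degree shift by $d_{Y_p} - d_\XA$) are correctly accounted for. One clean way to sidestep some of this is to verify the character identity directly: expand both sides of the claimed formula as $\GL$-characters using \eqref{eq:char-Dp}, \eqref{eq:decomp-S}, and the known character of $\mc{O}_{Y_p} = \Sym(\mc{Q}_p\oo V_2)$ pushed down from $\bb{G}_p$ via the Borel--Weil--Bott-type formulas for cohomology of homogeneous bundles on the Grassmannian, and then check equality of the multiplicity of each $\bb{S}_\ll V_1 \oo \bb{S}_\mu V_2$ coefficient-wise in $q$. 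I expect the combinatorics to collapse to a Vandermonde-type $q$-binomial identity. I would present the proof by first establishing that only $D_0,\dots,D_p$ appear and that cohomology is concentrated in even-or-odd degrees determined by parity of fiber dimension, then doing the character match, deferring the purely combinatorial $q$-binomial manipulation to a lemma.
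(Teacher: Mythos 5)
Your overall strategy --- identify the summands as $D_0,\dots,D_p$ by support and equivariance, then pin down the multiplicity polynomials by computing stalk cohomology of $R\pi_{p*}$ over each stratum $O_k$ --- is the same as the paper's, and the fiber identification is correct: over a point of $O_k$ the fiber is $\bb{G}(p-k;m-k)$, with Poincar\'e polynomial ${m-k\choose p-k}_{q^2}$. But there is a genuine gap at the crucial step. The multiplicity of $D_i$ is \emph{not} (a shift of) the Poincar\'e polynomial of the fiber over $O_i$; you seem to conflate the two, and indeed you write that the fiber $\bb{G}(p-i;m-i)$ ``contributes the $q$-binomial ${m-n\choose p-i}_{q^2}$,'' whereas its Poincar\'e polynomial is ${m-i\choose p-i}_{q^2}$. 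The discrepancy is exactly the point: the stalk of $R\pi_{p*}\bb{C}_{Y_p}[d_{Y_p}]$ at $x_k\in O_k$ is the sum over all $i\geq k$ of $f_i(q)$ times the stalk of $IC_{\ZA_i}^{\bullet}$ at $x_k$, and since the $\ZA_i$ are singular along the deeper strata, those IC stalks are not concentrated in a single degree. To invert this triangular system you need the stalk cohomology of each $IC_{\ZA_i}^{\bullet}$ as an independent input. The paper supplies it by identifying $\ZA_i\subset\XA$ with (an open piece of) a Schubert variety in a Grassmannian and quoting the Lascoux--Sch\"utzenberger formula for the corresponding parabolic Kazhdan--Lusztig polynomials, giving $\sum_j h^j(IC^{\bullet}_{\ZA_i,x_k})q^j = q^{-d_{\ZA_i}}{n-k\choose i-k}_{q^2}$; solving the system then reduces to a $q$-Vandermonde identity. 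Without this ingredient your argument cannot determine the $m_{i,j}$, and the ``no cancellation'' parity remark does not substitute for it.

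Your fallback suggestion --- verifying the identity of $q$-graded $\GL$-characters directly by computing $\pi_{p+}\mc{O}_{Y_p}$ via Borel--Weil--Bott on $\bb{G}_p$ --- is a genuinely different route (closer in spirit to the Raicu--Weyman local cohomology computations) and could in principle work, since the $D_i$ have disjoint weight supports $W^i$ and each $\mc{H}^j$ would be determined by its character. But as written it is only a plan: the cohomology of $\Sym(\mc{Q}_p\oo V_2)$ twisted through the relative de Rham/transfer module, degree by degree, is a substantial Bott-algorithm computation that you have not carried out, so it does not close the gap either.
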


\begin{proof}
 Using the Riemann--Hilbert correspondence, we replace the $\D$-modules in Theorem~\ref{thm:decomp-detl} with the corresponding perverse sheaves: $\mc{O}_{Y_p}$ with $IC_{Y_p}^{\bullet} = \bb{C}_{Y_p}[d_{Y_p}]$, and $D_i$ with $IC_{\ZA_i}^{\bullet}$. We write $f_i(q)\in\bb{Z}[q,q^{-1}]$ for the Laurent polynomials that encode the Decomposition Theorem for $IC_{Y_p}^{\bullet}$:
 \begin{equation}\label{eq:decomp-IC}
  R\pi_{p*}IC_{Y_p}^{\bullet} = \sum_{i=0}^p IC_{\ZA_i}^{\bullet} \cdot f_i(q).
 \end{equation}
 We determine $f_i(q)$ by considering stalk cohomology in (\ref{eq:decomp-IC}), as is done for instance in the proof of \cite[Theorem~6.1]{DT-toric}. We consider any point $x_k\in O_k$ and compute the stalk cohomology on both sides of (\ref{eq:decomp-IC}): since the fiber $\pi_p^{-1}(x_k)$ is isomorphic to the Grassmannian $\bb{G}(p-k;m-k)$, whose Poincar\'e polynomial is ${m-k\choose p-k}_{q^2}$, we get that the cohomology on the left is encoded by the Laurent polynomial
 \[ q^{-d_{Y_p}} \cdot {m-k\choose p-k}_{q^2} = q^{-p\cdot(m+n-p)}\cdot{m-k\choose p-k}_{q^2}.\]
The stalk cohomology of $IC_{\ZA_i,x_k}^{\bullet}$ is computed by a Kazhdan--Lusztig polynomial (see \cite[Theorem~12.2.5]{hottaetal} for the appropriate grading conventions): this is because of the identification of $\XA$ with the opposite dense Schubert cell in the Grassmannian $\bb{G}(n;m+n)$, under which the subvarieties $\ZA_p$ arise as intersections with Schubert varieties indexed by certain Grassmannian permutations \cite[Section~5.2]{Lak-Rag}. The corresponding Kazhdan--Lusztig polynomials are of parabolic type, computed by $q$-binomial coefficients \cite[Lemme~10.1]{las-sch}. The conclusion of this discussion is summarized by the identity
 \[ \sum_{j\in\bb{Z}} h^j(IC_{\ZA_i,x_k}^{\bullet})\cdot q^j = q^{-d_{\ZA_i}} \cdot {n-k\choose i-k}_{q^2}\mbox{ for }i\geq k.\]
 Using the fact that $d_{Y_p}-d_{\ZA_i} = (p-i)\cdot(m+n-p-i)$, it follows from (\ref{eq:decomp-IC}) that for each $k=0,\cdots,p$ we have an identity
 \[ {m-k\choose p-k}_{q^2} = \sum_{i=k}^p f_i(q) \cdot q^{(p-i)\cdot(m+n-p-i)} \cdot {n-k\choose i-k}_{q^2}.\]
 By plugging in $k=p,p-1,\cdots,0$ in this order, the polynomials $f_i(q)$ are uniquely determined by induction. The conclusion of Theorem~\ref{thm:decomp-detl} amounts to showing that $f_i(q)=q^{-(m-n-p+i)\cdot(p-i)}\cdot{m-n\choose p-i}_{q^2}$, which in turn is equivalent to verifying the $q$-binomial identities
\[{m-k\choose p-k}_{q^2} = \sum_{i=k}^p q^{2(p-i)\cdot(n-i)} \cdot {m-n\choose p-i}_{q^2} \cdot {n-k\choose i-k}_{q^2},\mbox{ for all }0\leq k\leq p.\]
Writing $a=m-n$, $b=n-k$, $c=p-k$, $j=p-i$, the above identities become
\begin{equation}\label{eq:qbin-identities}
{a+b\choose c}_q = \sum_{j=0}^c q^{j\cdot(b+c-j)}\cdot{a\choose j}_q \cdot {b\choose c-j}_q.
\end{equation}
We derive (\ref{eq:qbin-identities}) following \cite[Chapter~1]{gasper-rahman}. We have using \cite[Exercise~1.2(vi)]{gasper-rahman} that
 \[ (z;q)_n := (1-z)\cdot(1-zq)\cdots(1-zq^{n-1})=\sum_{j=0}^n {n\choose j}_q\cdot(-z)^q\cdot q^{k\choose 2},\]
We compute the coefficient of $z^c$ on both sides of the identity $(-z;q)_{a+b} = (-zq^b;q)_a\cdot (-z;q)_b$ and deduce
 \[ {a+b\choose c}_q \cdot q^{c\choose 2} = \sum_{j=0}^c {a\choose j}_q \cdot q^{bj}\cdot q^{j\choose 2} \cdot {b\choose c-j}_q \cdot q^{c-j\choose 2}.\]
 Since $bj+{j\choose 2}+{c-j\choose 2}-{c\choose 2} = j\cdot(b-c+j)$, we obtain (\ref{eq:qbin-identities}) after dividing by $q^{c\choose 2}$.
\end{proof}

\subsection{Decomposition theorem for some $\D$-modules of geometric origin}\label{subsec:decomp-geom-origin}

We take $p=n$ in (\ref{eq:resn-Yp-of-Zp}), and write for simplicity $Y=Y_n$, $\pi=\pi_n$. As in Section~\ref{subsec:bundles}, $Y$ has a rank stratification with $\ZA_p^{Y} = \ZA_{\bb{G}_n}(\mc{Q}_n,V_2)$ and $O_p^Y = \ZA_p^Y\setminus\ZA_{p-1}^Y$. We let $D_p^{Y} = \mc{L}(\ZA_p^{Y},Y)$ denote the corresponding simple $\D$-modules. The goal of this section is to explain how Theorem~\ref{thm:decomp-DpY} follows from Theorem~\ref{thm:decomp-detl}.

\begin{proof}[Proof of Theorem~\ref{thm:decomp-DpY}]
 Let $\bb{F} = \bb{F}(n,p;V_1)$ the partial flag variety parametrizing $2$-step flags of quotients of~$V_1$, of ranks $n$ and $p$ respectively. Consider the commutative diagram
\[
\xymatrix{
T = \bb{A}_{\bb{F}}(\mc{Q}_p\oo V_2) \ar[rr]^g \ar[d]_s & & \bb{A}_{\bb{G}_p}(\mc{Q}_p\oo V_2) = Y_p \ar[d]^{\pi_p} \\
Y = \bb{A}_{\bb{G}_n}(\mc{Q}_n\oo V_2) \ar[rr]_{\pi} & & \bb{A}(V_1\oo V_2)=\XA
}
\]
The map $s$ is a small resolution of singularities of the variety $\ZA_p^Y$, so by the Decomposition Theorem we have
\[ s_+\mc{O}_T = \mc{L}(\ZA_p^Y,Y) = D_p^Y,\]
a complex concentrated in degree zero. The map $g$ is a Grassmannian bundle, with fibers isomorphic to $\bb{G}(n-p;m-p)$. Since the Poincar\'e polynomial of $\bb{G}(n-p;m-p)$ is ${m-p\choose n-p}_{q^2}$, we obtain
\[ \sum_{j\in\bb{Z}}\mc{H}^j(g_+\mc{O}_T) \cdot q^j = \mc{O}_{Y_p} \cdot q^{-(n-p)\cdot(m-n)}\cdot {m-p\choose n-p}_{q^2}.\]
Since $g_+\mc{O}_T = \bigoplus_j\mc{H}^j(g_+\mc{O}_T)[-j]$, it follows that
\[ \pi_+D_p^Y = \pi_+(s_+\mc{O}_T) = \pi_{p+}(g_+\mc{O}_T) = \bigoplus_j \pi_{p+}(\mc{H}^j(g_+\mc{O}_T))[-j],\]
and we conclude using Theorem~\ref{thm:decomp-detl}, and the fact that each $\mc{H}^j(g_+\mc{O}_T)$ is a direct sum of copies of $\mc{O}_{Y_p}$.
\end{proof}

\begin{remark} It is notable that the formulas in Theorem~\ref{thm:decomp-DpY} are reminiscent of those for local cohomology with determinantal support \cites{lor-rai,raicu-weyman}. However, we were not able to establish a direct implication between the two contexts. One reason for this is perhaps the fact that, due to the Decomposition Theorem, the modules involved in Theorem~\ref{thm:decomp-DpY} are always semi-simple (and even better, the direct image complexes are formal). By contrast, local cohomology is described by push-forwards along open immersions, which often result in interesting extensions that are typically hard to control. One common feature in both Theorem~\ref{thm:decomp-DpY} and the work on local cohomology is the presence of $q$-binomial coefficients, which compute the Poincar\'e series of Grassmann varieties. The standard desingularizations of determinantal varieties arise as vector bundles over Grassmann varieties, and the fibers of the desingularization maps are themselves Grassmannians: this is potentially an explanation for the ubiquity of $q$-binomial coefficients in the aforementioned works.
\end{remark}

\subsection{The determination of weights}\label{subsec:weights-loccoh}

The results in the previous sections have immediate analogues at the level of Hodge modules. More precisely, it follows from \cite[Section~8.3.3(m8)--(m11)]{hottaetal} that if $D_p^Y$ underlies a pure Hodge module of weight $w$ then $\mc{H}^j(\pi_+D_p^Y)$ underlies a pure Hodge module of weight $w+j$. We record some basic consequences of Theorem~\ref{thm:decomp-DpY} before proving Theorem~\ref{thm:Rf+Hodge}.

\begin{lemma}\label{lem:Di-in-Rpi-DYp}
 (a) If $D_i$ appears as a summand in $\mc{H}^j(\pi_+D_p^Y)$ then $i\leq p$.
 
 (b) For $i\leq p$, the largest $j$ for which $D_i$ appears as a summand in $\mc{H}^j(\pi_+D_p^Y)$ is 
 \[j=(n-p)\cdot(m-n)+(p-i)\cdot(m-n-p+i).\]

 (c) $D_i$ appears as a summand in $\mc{H}^{(n-i)\cdot(m-n)}(\pi_+D_p^Y)$ if and only if $i=p$.
\end{lemma}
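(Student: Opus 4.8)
The plan is to read off everything directly from the formal identity in Theorem~\ref{thm:decomp-DpY}, which says
\[\sum_{j\in\bb{Z}} \mc{H}^j(\pi_{+}D_p^Y)\cdot q^j = q^{-(n-p)\cdot(m-n)}\cdot{m-p\choose n-p}_{q^2}\cdot \sum_{i=0}^p D_i \cdot q^{-(m-n-p+i)\cdot(p-i)}\cdot{m-n\choose p-i}_{q^2}.\]
All structure of $\mc{H}^j(\pi_+D_p^Y)$ as a direct sum of simples is obtained by equating coefficients of powers of $q$, so each part of the lemma reduces to an elementary inspection of the contribution of $D_i$ to the right-hand side. The contribution of $D_i$ is the Laurent polynomial
\[g_i(q) := q^{-(n-p)\cdot(m-n)}\cdot{m-p\choose n-p}_{q^2}\cdot q^{-(m-n-p+i)\cdot(p-i)}\cdot{m-n\choose p-i}_{q^2},\]
and one simply needs to determine for which $j$ the coefficient of $q^j$ in $g_i(q)$ is nonzero (it is then automatically positive, since all the $q$-binomials appearing have nonnegative coefficients).

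For part (a): by our convention ${m-n\choose p-i}_{q^2}=0$ when $m-n<p-i$, but here $p-i\leq p\leq n\leq m$, so $m-n\geq 0$ and the potential obstruction is only that the factor ${m-n\choose p-i}_{q^2}$ vanishes if $p-i<0$, i.e. if $i>p$; hence $g_i(q)=0$ for $i>p$, and $D_i$ can appear only for $i\leq p$. (The terms with $i>p$ simply do not occur in the sum on the right.) For part (b): since $g_i(q)$ is a monomial times a product of two $q$-binomial polynomials in $q^2$, its top-degree term comes from taking the top-degree term of each $q$-binomial. A $q$-binomial ${a\choose b}_q$ has degree $b(a-b)$, so ${m-p\choose n-p}_{q^2}$ has $q$-degree $2(n-p)(m-n)$ and ${m-n\choose p-i}_{q^2}$ has $q$-degree $2(p-i)(m-n-p+i)$. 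Adding in the monomial shifts, the top power of $q$ in $g_i(q)$ is
\[-(n-p)(m-n) + 2(n-p)(m-n) - (m-n-p+i)(p-i) + 2(p-i)(m-n-p+i),\]
which simplifies to $(n-p)(m-n) + (p-i)(m-n-p+i)$, giving the claimed value of $j$; this coefficient is nonzero because leading coefficients of $q$-binomials are $1$. Part (c) is then immediate: by part (b), the largest $j$ for which $D_i$ appears in $\mc{H}^j(\pi_+D_p^Y)$ is $(n-p)(m-n)+(p-i)(m-n-p+i)$; setting this equal to $(n-i)(m-n)$ and subtracting $(n-p)(m-n)$ from both sides yields $(p-i)(m-n-p+i)=(p-i)(m-n)$, i.e. $(p-i)(i-p)=0$, so $p=i$. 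Conversely, for $i=p$ the formula in (b) gives exactly $j=(n-p)(m-n)$, and one checks this is also the only $j$ in which $D_p$ appears (the $q$-binomial ${m-n\choose 0}_{q^2}=1$ contributes a single power), so $D_p$ does appear in $\mc{H}^{(n-p)(m-n)}(\pi_+D_p^Y)$.

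There is no real obstacle here: the whole lemma is a bookkeeping consequence of Theorem~\ref{thm:decomp-DpY}, and the only thing to be careful about is the degenerate cases of the conventions (when $p=n$, or when $m=n$, so that $m-n=0$ and several $q$-binomials collapse to $1$), and keeping track that ``largest $j$'' refers to the top degree of $g_i(q)$ rather than the bottom. I would write the proof as three short paragraphs mirroring (a), (b), (c), each time just pointing at the relevant factor of $g_i(q)$ and its degree.
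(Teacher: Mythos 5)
Your proposal is correct and follows essentially the same route as the paper: all three parts are read off from the Laurent polynomial $g_i(q)$ multiplying $D_i$ in Theorem~\ref{thm:decomp-DpY}, using that ${a\choose b}_q$ has nonnegative coefficients and top degree $b(a-b)$, so the largest $j$ in which $D_i$ occurs is the top degree of $g_i$. The one place to be careful is the ``only if'' direction of (c): showing that the top degree of $g_i$ merely \emph{differs} from $(n-i)(m-n)$ does not by itself exclude $D_i$ from appearing in that cohomological degree; you need the strict inequality $(n-p)(m-n)+(p-i)(m-n-p+i)<(n-i)(m-n)$ for $i<p$, which is exactly how the paper argues, and which your own computation $(p-i)(m-n-p+i)-(p-i)(m-n)=-(p-i)^2<0$ already delivers.
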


\begin{proof} Conclusions (a) and (b) are direct consequences of the formula in Theorem~\ref{thm:decomp-DpY}. Part (c) follows from (a), (b), and the inequalities
\[(n-p)\cdot(m-n)+(p-i)\cdot(m-n-p+i) < (n-i)\cdot(m-n)\mbox{ for }i<p,\]
which can be rewritten as $(p-i)\cdot(m-n-p+i)<(p-i)\cdot(m-n)$, and follow from $p>i$.
\end{proof}

\begin{proof}[Proof of Theorem~\ref{thm:Rf+Hodge}] Using the exact sequence (with $\ZA=\ZA_{n-1}=\XA\setminus U$)
\[ 0 \lra H^0_{\ZA}(\XA,\mc{O}_{\XA}) \lra \mc{O}_{\XA}^H \lra \mc{H}^0(f_+\mc{O}_U^H) \lra H^1_{\ZA}(\XA,\mc{O}_{\XA}) \lra 0,\]
it follows from (\ref{eq:Dp-as-loccoh}) that we have an isomorphism $\mc{H}^0(f_+\mc{O}_U^H)\simeq \mc{O}_X^H = IC_{\ZA_n}^H$, proving the case $p=n$ ($j=0$) of the theorem. We assume from now on that $j=(n-p)\cdot(m-n)$ for $1\leq p\leq n$, so that $\mc{H}^j(f_+\mc{O}_U^H)$ is a pure Hodge module with underlying $\D$-module $D_p$. Our goal is to prove that its weight is $w'_p=mn+(n-p)\cdot(m-n+1)$ (from which it follows that $\mc{H}^j(f_+\mc{O}_U^H)=IC_{\ZA_p}^H(k'_p)$, as desired).

Using the identification $U\simeq O_n^Y \simeq O_n$, with $Y=Y_n$ as in Section~\ref{subsec:decomp-geom-origin}, we can factor $f=\pi\circ\iota$ as in (\ref{eq:factor-f}). Since $Y$ is locally identified with a space of $n\times n$ matrices over a base $\bb{G}_n$ of dimension $n\cdot(m-n)$, it follows from Theorem~\ref{thm:weight-filtration} and the behavior of weights under pull-back that the weight filtration on $\iota_+\mc{O}_U^H=\mc{O}_Y(*\ZA_n^Y)$ satisfies $\gr^W_w\iota_+\mc{O}_U^H=0$ if $w<mn$ or $w>mn+n$, and
\begin{equation}\label{eq:weight-DsY}
\gr_{mn+n-s}^W\iota_+\mc{O}_U^H = IC_{\ZA^Y_s}^H(k_s),\mbox{ with underlying $\D$-module }D_s^Y,
\end{equation}
where $k_s=-{n-s+1\choose 2}$ as in (\ref{eq:formula-kp}). Using the spectral sequence associated to this filtration, it follows that 
\[\mc{H}^j(f_+\mc{O}_U^H) = \mc{H}^j\left(\pi_+\left(\iota_+\mc{O}_U^H\right)\right)\mbox{ is a sub-quotient of }\bigoplus_{s=0}^n \mc{H}^j\left(\pi_+\left(IC_{\ZA^Y_s}^H(k_s)\right)\right).\]
Since $\mc{H}^j(f_+\mc{O}_U^H)$ has underlying $\D$-module $D_p$ and $\mc{H}^j\left(\pi_+\left(IC_{\ZA^Y_s}^H(k_s)\right)\right)$ has underlying $\D$-module $\mc{H}^j(\pi_+D^Y_s)$, it follows from Lemma~\ref{lem:Di-in-Rpi-DYp} that $\mc{H}^j(f_+\mc{O}_U^H)$ is a sub-quotient of $\mc{H}^j\left(\pi_+\left(IC_{\ZA^Y_p}^H(k_p)\right)\right)$ whose weight is by (\ref{eq:weight-DsY})
\[ j + (mn+n-p) = mn + (n-p)\cdot(m-n+1),\]
concluding our proof.
\end{proof}

\section*{Acknowledgements}
 We are grateful to Mircea Musta\c t\u a for advice and many useful conversations regarding this project, and to Mihnea Popa for helping us improve the discussion of the generation level for the Hodge filtration. We thank the anonymous referee for many helpful comments, and for suggesting an alternative approach to Theorem~\ref{thm:weight-filtration}. Experiments with the computer algebra software Macaulay2 \cite{GS} have provided numerous valuable insights. Perlman acknowledges the support of the National Science Foundation Graduate Research Fellowship under grant DGE-1313583. Raicu acknowledges the support of the National Science Foundation Grant DMS-1901886.

	\begin{bibdiv}
		\begin{biblist}

\bib{BBD}{article}{
   author={Be\u{\i}linson, Aleksandr A.},
   author={Bernstein, Joseph N.},
   author={Deligne, Pierre},
   title={Faisceaux pervers},
   language={French},
   conference={
      title={Analysis and topology on singular spaces, I},
      address={Luminy},
      date={1981},
   },
   book={
      series={Ast\'{e}risque},
      volume={100},
      publisher={Soc. Math. France, Paris},
   },
   date={1982},
   pages={5--171},
}

\bib{BL}{book}{
   author={Bernstein, Joseph},
   author={Lunts, Valery},
   title={Equivariant sheaves and functors},
   series={Lecture Notes in Mathematics},
   volume={1578},
   publisher={Springer-Verlag, Berlin},
   date={1994},
   pages={iv+139},
}

\bib{deCat-Mig}{article}{
   author={de Cataldo, Mark Andrea},
   author={Migliorini, Luca},
   title={The decomposition theorem, perverse sheaves and the topology of
   algebraic maps},
   journal={Bull. Amer. Math. Soc. (N.S.)},
   volume={46},
   date={2009},
   number={4},
   pages={535--633},
}

\bib{deCat-notes}{article}{
   author={de Cataldo, Mark Andrea},
   title={Perverse sheaves and the topology of algebraic varieties},
   conference={
      title={Geometry of moduli spaces and representation theory},
   },
   book={
      series={IAS/Park City Math. Ser.},
      volume={24},
      publisher={Amer. Math. Soc., Providence, RI},
   },
   date={2017},
   pages={1--58},
}

\bib{DT-toric}{article}{
   author={de Cataldo, Mark Andrea},
   author={Migliorini, Luca},
   author={Musta\c{t}\u{a}, Mircea},
   title={Combinatorics and topology of proper toric maps},
   journal={J. Reine Angew. Math.},
   volume={744},
   date={2018},
   pages={133--163},
}

\bib{DCEP}{article}{
   author={de Concini, Corrado},
   author={Eisenbud, David},
   author={Procesi, Claudio},
   title={Young diagrams and determinantal varieties},
   journal={Invent. Math.},
   volume={56},
   date={1980},
   number={2},
   pages={129--165},
}

\bib{gasper-rahman}{book}{
   author={Gasper, George},
   author={Rahman, Mizan},
   title={Basic hypergeometric series},
   series={Encyclopedia of Mathematics and its Applications},
   volume={96},
   edition={2},
   publisher={Cambridge University Press, Cambridge},
   date={2004},
}

\bib{GS}{article}{
          author = {Grayson, Daniel R.},
          author = {Stillman, Michael E.},
          title = {Macaulay 2, a software system for research
                   in algebraic geometry},
          journal = {Available at \url{http://www.math.uiuc.edu/Macaulay2/}}
        }
        
\bib{gyoja}{article}{
   author={Gyoja, Akihiko},
   title={Mixed Hodge theory and prehomogeneous vector spaces},
   note={Research on prehomogeneous vector spaces (Japanese) (Kyoto, 1996)},
   journal={S\={u}rikaisekikenky\={u}sho K\={o}ky\={u}roku},
   number={999},
   date={1997},
   pages={116--132},
}

\bib{hottaetal}{book}{
   author={Hotta, Ryoshi},
   author={Takeuchi, Kiyoshi},
   author={Tanisaki, Toshiyuki},
   title={$D$-modules, perverse sheaves, and representation theory},
   series={Progress in Mathematics},
   volume={236},
   note={Translated from the 1995 Japanese edition by Takeuchi},
   publisher={Birkh\"{a}user Boston, Inc., Boston, MA},
   date={2008},
   pages={xii+407},
}

\bib{kashi}{book}{
   author={Kashiwara, Masaki},
   title={$D$-modules and microlocal calculus},
   series={Translations of Mathematical Monographs},
   volume={217},
   note={Translated from the 2000 Japanese original by Mutsumi Saito;
   Iwanami Series in Modern Mathematics},
   publisher={American Mathematical Society, Providence, RI},
   date={2003},
   pages={xvi+254},
}

\bib{Lak-Rag}{book}{
   author={Lakshmibai, Venkatramani},
   author={Raghavan, Komaranapuram N.},
   title={Standard monomial theory},
   series={Encyclopaedia of Mathematical Sciences},
   volume={137},
   note={Invariant theoretic approach;
   Invariant Theory and Algebraic Transformation Groups, 8},
   publisher={Springer-Verlag, Berlin},
   date={2008},
   pages={xiv+265},
}

\bib{las-sch}{article}{
   author={Lascoux, Alain},
   author={Sch\"{u}tzenberger, Marcel-Paul},
   title={Polyn\^{o}mes de Kazhdan \& Lusztig pour les grassmanniennes},
   language={French},
   conference={
      title={Young tableaux and Schur functors in algebra and geometry
      (Toru\'{n}, 1980)},
   },
   book={
      series={Ast\'{e}risque},
      volume={87},
      publisher={Soc. Math. France, Paris},
   },
   date={1981},
   pages={249--266},
}

\bib{lor-rai}{article}{
   author={L\H{o}rincz, Andr\'{a}s C.},
   author={Raicu, Claudiu},
   title={Iterated local cohomology groups and Lyubeznik numbers for determinantal rings},
   journal={Algebra \& Number Theory},
   volume={14},
   date={2020},
   number={9},
   pages={2533--2569},
}

\bib{lor-wal}{article}{
   author={L\H{o}rincz, Andr\'{a}s C.},
   author={Walther, Uli},
   title={On categories of equivariant $\mathcal{D}$-modules},
   journal={Adv. Math.},
   volume={351},
   date={2019},
   pages={429--478},
}

\bib{MP-hodge-ideals}{article}{
   author={Musta\c{t}\u{a}, Mircea},
   author={Popa, Mihnea},
   title={Hodge Ideals},
   journal={Mem. Amer. Math. Soc.},
   volume={262},
   date={2019},
   number={1268},
}

\bib{MP-min-exp}{article}{
   author={Musta\c{t}\u{a}, Mircea},
   author={Popa, Mihnea},
   title={Hodge filtration, minimal exponent, and local vanishing},
   journal={Invent. Math.},
   volume={220},
   date={2020},
   number={2},
   pages={453--478},
}

\bib{raicu-dmods}{article}{
   author={Raicu, Claudiu},
   title={Characters of equivariant $\mathcal{D}$-modules on spaces of matrices},
   journal={Compos. Math.},
   volume={152},
   date={2016},
   number={9},
   pages={1935--1965},
   issn={0010-437X},
}

\bib{raicu-weyman}{article}{
   author={Raicu, Claudiu},
   author={Weyman, Jerzy},
   title={Local cohomology with support in generic determinantal ideals},
   journal={Algebra \& Number Theory},
   volume={8},
   date={2014},
   number={5},
   pages={1231--1257},
   issn={1937-0652},
}

\bib{raicu-weyman-witt}{article}{
   author={Raicu, Claudiu},
   author={Weyman, Jerzy},
   author={Witt, Emily E.},
   title={Local cohomology with support in ideals of maximal minors and
   sub-maximal Pfaffians},
   journal={Adv. Math.},
   volume={250},
   date={2014},
   pages={596--610},
   issn={0001-8708},
}

\bib{saito-PRIMS}{article}{
   author={Saito, Morihiko},
   title={Modules de Hodge polarisables},
   language={French},
   journal={Publ. Res. Inst. Math. Sci.},
   volume={24},
   date={1988},
   number={6},
   pages={849--995 (1989)},
}

\bib{saito-MHM}{article}{
   author={Saito, Morihiko},
   title={Mixed Hodge modules},
   journal={Publ. Res. Inst. Math. Sci.},
   volume={26},
   date={1990},
   number={2},
   pages={221--333},
   issn={0034-5318},
}

\bib{saito-gen-level}{article}{
   author={Saito, Morihiko},
   title={On the Hodge filtration of Hodge modules},
   language={English, with English and Russian summaries},
   journal={Mosc. Math. J.},
   volume={9},
   date={2009},
   number={1},
   pages={161--191, back matter},
   issn={1609-3321},
}

\bib{strickland}{article}{
   author={Strickland, Elisabetta},
   title={On the conormal bundle of the determinantal variety},
   journal={J. Algebra},
   volume={75},
   date={1982},
   number={2},
   pages={523--537},
}

\bib{weyman}{book}{
   author={Weyman, Jerzy},
   title={Cohomology of vector bundles and syzygies},
   series={Cambridge Tracts in Mathematics},
   volume={149},
   publisher={Cambridge University Press, Cambridge},
   date={2003},
   pages={xiv+371},
}

		\end{biblist}
	\end{bibdiv}

\end{document}